\def\equationautorefname~#1\null{(#1)}
\newcommand{\mynewthm}[3][]{%
  \newaliascnt{#2}{thmnum}%
  \newtheorem{#2}[#2]{#3}%
  \aliascntresetthe{#2}%
  \newtheorem*{#2*}{#3}%
  \expandafter\newcommand\csname #2autorefname\endcsname{#3}%
  \expandafter\renewcommand\csname the#2\endcsname{\thethmnum}%
}
\newtheorem*{clm}{Claim}
\newenvironment{clmprf}{%
  \begin{proof}[Proof of claim]%
  }{\end{proof}}
\let\xxx=\frametitle
\def\frametitle#1{%
  \xxx{%
    \setbeamercolor*{math text}{use={titlelike,my math text},fg=titlelike.fg!80!my math text.fg}%
    #1}%
  \setbeamercolor{math text}{use=my math text,fg=my math text.fg}%
}
\newcommand{\beamerenv}[3]{%
\newenvironment<>{#1}%
{%
  \setbeamercolor{temp}{structure}%
  \setbeamercolor{structure}{fg=#2}%
  \setbeamercolor{block body}{use=structure,bg=structure.fg!5!white}%
  \begin{#3}%
}%
{\end{#3}\setbeamercolor{structure}{temp}}}
\newcommand{\mynewthm}[3][green!50!black]{%
  \newtheorem*{#2x}{#3}%
  \beamerenv{#2}{#1}{#2x}%
}
\newcommand{\myiffrench}[2]{#2}
\newcommand{\myiffrench}[2]{\iflanguage{french}{#1}{#2}}
\theoremstyle{plain}
\theoremstyle{definition}
\theoremstyle{remark}
\newcommand{\myenumlabel}[1]{\textnormal{(\roman{#1})}}
\newcounter{cycprfcnt}
\newcommand{\cycprfpreamble}%
{%
  \setcounter{cycprfcnt}{1}
  \setlength{\itemindent}{0.5\leftmargin}%
  \setlength{\leftmargin}{0pt}%
  \newcommand{\cpcurr}{\myenumlabel{cycprfcnt}}%
  \newcommand{\cpnext}{\addtocounter{cycprfcnt}{1}\cpcurr}%
  \newcommand{\impnext}{\cpcurr{} $\Longrightarrow$ \cpnext.}%
}%
\qedhere\end{list}}%
\newenvironment{cycprf*}%
{\begin{list}{\impnext}%
  {\cycprfpreamble}}%
{\end{list}}%
\def\indsym#1#2{%
  \setbox0=\hbox{$\m@th#1x$}%
  \kern\wd0%
  \hbox to 0pt{\hss$\m@th#1\mid$\hbox to 0pt{$\m@th#1^{#2}$\hss}\hss}%
  \lower.9\ht0\hbox to 0pt{\hss$\m@th#1\smile$\hss}%
  \kern\wd0}
\def\nindsym#1#2{%
  \setbox0=\hbox{$\m@th#1x$}%
  \kern\wd0%
  \hbox to 0pt{\hss$\m@th#1\not$\kern1.4\wd0\hss}
  \hbox to 0pt{\hss$\m@th#1\mid$\hbox to 0pt{$\m@th#1^{#2}$\hss}\hss}%
  \lower.9\ht0\hbox to 0pt{\hss$\m@th#1\smile$\hss}%
  \kern\wd0}
\def\dotminussym#1#2{%
  \setbox0=\hbox{$\m@th#1-$}%
  \kern.5\wd0%
  \hbox to 0pt{\hss\hbox{$\m@th#1-$}\hss}%
  \raise.6\ht0\hbox to 0pt{\hss$\m@th#1.$\hss}%
  \kern.5\wd0}
\newcommand{\dotminus}{\mathbin{\mathpalette\dotminussym{}}}
\renewcommand{\emptyset}{\varnothing}
\renewcommand{\setminus}{\smallsetminus}
\newcommand{\sfrac}[2]{\hbox{$\frac{#1}{#2}$}}
\newcommand{\half}[1][1]{\sfrac{#1}{2}}
\DeclareMathOperator{\tp}{tp}
\DeclareMathOperator{\Th}{Th}
\DeclareMathOperator{\tS}{S}
\newcommand{\fS}{\mathfrak{S}}
\newcommand{\cC}{\mathcal{C}}
\newcommand{\cL}{\mathcal{L}}
\newcommand{\cM}{\mathcal{M}}
\newcommand{\bA}{\mathbf{A}}
\newcommand{\bC}{\mathbf{C}}
\newcommand{\bN}{\mathbf{N}}
\newcommand{\bP}{\mathbf{P}}
\newcommand{\bR}{\mathbf{R}}
\newcommand{\bZ}{\mathbf{Z}}
\newcommand{\ba}{\mathbf{a}}
\newcommand{\bb}{\mathbf{b}}
\newcommand{\bc}{\mathbf{c}}
\DeclareMathOperator{\Sph}{Sph}
\DeclareMathOperator{\rad}{rad}
\DeclareMathOperator{\Sq}{Sq}
\newcommand{\llangle}{\langle\!\!\langle}
\newcommand{\rrangle}{\rangle\!\!\rangle}
\newcommand{\<}{\llangle}
\renewcommand{\>}{\rrangle}
\begin{document}

\title{Model theoretic properties of metric valued fields}

\author{Itaï \textsc{Ben Yaacov}}

\address{Itaï \textsc{Ben Yaacov} \\
  Université Claude Bernard -- Lyon 1 \\
  Institut Camille Jordan, CNRS UMR 5208 \\
  43 boulevard du 11 novembre 1918 \\
  69622 Villeurbanne Cedex \\
  France}

\urladdr{\url{http://math.univ-lyon1.fr/~begnac/}}

\thanks{Author supported by
  ANR chaire d'excellence junior THEMODMET (ANR-06-CEXC-007) and
  by the Institut Universitaire de France.}

\thanks{The author would like to thank Ehud Hrushovski and
  C.\ Ward Henson for several inspiring discussions.}

\svnInfo $Id: MVF.tex 1572 2013-05-07 08:41:38Z begnac $
\thanks{\textit{Revision} {\svnInfoRevision} \textit{of} \today}

\keywords{valued field ; real closed field ; metric structure}
\subjclass[2000]{03C90 ; 03C60 ; 03C64}

\begin{abstract}
  We study model theoretic properties of valued fields
  (equipped with a real-valued multiplicative valuation),
  viewed as metric structures in continuous first order logic.

  For technical reasons we prefer to consider not the valued field
  $(K,|{\cdot}|)$ directly, but rather the associated projective
  spaces $K\bP^n$, as bounded metric structures.

  We show that the class of (projective spaces over)
  metric valued fields is elementary, with theory $MVF$,
  and that the projective spaces
  $\bP^n$ and $\bP^m$ are biïnterpretable for every
  $n,m \geq 1$.
  The theory $MVF$ admits a model completion
  $ACMVF$, the theory of algebraically closed metric valued fields
  (with a non trivial valuation).
  This theory is strictly stable (even up to perturbation).

  Similarly, we show that the theory of real closed metric valued
  fields, $RCMVF$, is the model companion of the theory of formally
  real metric valued fields, and that it is dependent.
\end{abstract}

\maketitle

\section{The theory of metric valued fields}

Let us recall some terminology from Berkovich \cite{Berkovich:SpectralTheory}.
A \emph{semi-normed ring} is a unital commutative ring $R$
equipped with a mapping $|{\cdot}|\colon R \to \bR^{\geq 0}$
such that
\begin{enumerate}
\item $|1| = 1$,
\item $|xy| \leq  |x| |y|$,
\item $|x + y| \leq |x| + |y|$.
\end{enumerate}
If $|x| = 0 \Longrightarrow x = 0$ then $|{\cdot}|$ is a \emph{norm}.
A semi-norm is \emph{multiplicative} if $|xy| = |x||y|$.
A multiplicative norm is also called a \emph{valuation}.
Thus, a valued field is equipped with a natural metric structure
$d(x,y) = |x-y|$.
In some contexts, a valuation is allowed to take values in
$\Gamma \cup \{0\}$ where $(\Gamma,\cdot)$ is an arbitrary ordered
Abelian group and $0 < \Gamma$, but this will not be the case in the
present text.
When we wish to make this explicit we shall refer to our fields as
\emph{metric valued fields}.

If $K$ is a complete valued field then either
$K \in \{\bR,\bC\}$ and $|{\cdot}|$ is the usual absolute value to
some power (in which case $|{\cdot}|$ is \emph{Archimedean})
or $|x+y| \leq |x| \vee |y|$
($|{\cdot}|$ is \emph{non Archimedean}, or \emph{ultra-metric}).
From a model theoretic point of view,
Archimedean valued fields, being locally compact,
resemble finite structures of classical logic and are thus far less
interesting than their ultra-metric counterparts.
On the other hand, while everything we do here applies to arbitrary
valued fields, including Archimedean ones, restricting our attention
to the ultra-metric case does allow us many simplifications.
Thus, with very little loss of generality, we shall only consider
ultra-metric valued fields.

\begin{conv}
  Throughout, unless explicitly stated otherwise, by a \emph{valued field} we mean a non Archimedean one.
\end{conv}

The valuation is said to be \emph{trivial} if $|x| = 1$ for every $x \neq 0$.
It is \emph{discrete} if the image of $|{\cdot}|$ on $K^\times$ is discrete.
Clearly every trivial valuation is discrete.
On the other hand, a non trivial valuation on an algebraically (or separably) closed field cannot be discrete.

A non trivially valued field is unbounded as a metric space, and therefore does not fit in the framework of standard bounded continuous logic.
One device we use quite often with Banach space structures (Banach spaces, Banach lattices, and so on) is to restrict our attention to the structure formed by the closed unit ball.
This approach may seem natural for valued fields as well, since the unit ball is simply the corresponding valuation ring.
However, in the case of a non discrete valuation this approach is not adequate, as shown by the following result.

\begin{prp}
  Let $(K,|{\cdot}|)$ be a field equipped with a non discrete valuation, and let $R = (R,0,1,-,+,\cdot,|{\cdot}|)$ be its valuation ring.
  Then $R$ cannot be saturated as a metric structure (i.e., in the sense of continuous logic).
  In fact, it cannot even realise every type over $\emptyset$.
\end{prp}
\begin{proof}
  Since $R$ is not discrete we can find for each $n$ an element $a_n \in R$ such that $1-2^{-n} < |a_n| < 1$.
  Such an element is not invertible in $R$, and worse, for every $b \in R$ we have $|a_nb| < |b| \leq 1$, whereby $|a_nb-1| = 1$.
  In other words, each $a_n$ satisfies the assertion that $\inf_y \, |xy-1| = 1$.
  Thus in an ultra-power of $R$ there exists $a$ such that $|a| = \inf_y \, |ay-1| = 1$.
  Since every element of $R$ of value $1$ is invertible, such an element cannot exist in $R$.
\end{proof}

Therefore, if we are to hope for a reasonable model theoretic treatment of valued fields, the entire field should be considered as an unbounded structure.
Unbounded metric structures are discussed in \cite{BenYaacov:Unbounded}, where we also introduce an \emph{emboundment} process whereby unbounded structures can be turned into bounded ones through the addition of a single point at infinity.
In the case of a valued field, the resulting structure can be naturally identified (as a set of points) with the projective line, which is a natural object in itself.
For our purposes it will be more convenient to consider the projective line directly, rather than as the emboundment of the field (and one can check that the two structures are interdefinable).
As in the general case of emboundment, even though the field language contains function symbols, these do not pass on to the projective line.
Indeed, the addition map $\bigl( [x:1], [y:1] \bigr) \mapsto [x+y:1]$ is ill defined at $\bigl( [1:0], [1:0] \bigr)$, and similarly $\bigl( [x:1], [y:1] \bigr) \mapsto [xy:1]$ is ill defined at $\bigl( [0:1], [1:0] \bigr)$.
We shall therefore have to do, at least for the time being, with a purely relational language (this will be remedied later on when we consider projective spaces of higher dimension).

We recall that the projective $n$-space over a field $K$ is the quotient $(K^{n+1} \setminus \{0\})/K^\times$.
The class of $(a) = (a_i) = (a_0,\ldots,a_n)$ is denoted $\ba = [a] = [a_i] = [a_0:\ldots:a_n]$.
Dividing by a coordinate with maximal value we see that any member of $K\bP^n$ can be written as $[a_i]$ where $\bigvee |a_i| = 1$.
From now on we shall assume that all the representatives are of this form, which determines them up to a multiplicative factor from the group $\{x \in K\colon |x| = 1\} = \ker |{\cdot}|$.

\begin{ntn}
  Let $\bar X = (X_0,\ldots,X_{n-1})$ denote $n$ formal unknowns.
  We let $\bar X^*$ denote a copy of $\bar X$, and let
  $\bZ^h[\bar X] \subseteq \bZ[\bar X,\bar X^*]$
  denote the ring of polynomials in
  $\bar X$, $\bar X^*$ which are homogeneous
  in each pair $(X_i,X^*_i)$ separately
  (which is stronger than being homogeneous in all the variables
  simultaneously).
  For a polynomial $Q(\bar X,\bar X^*) \in \bZ^h[\bar X]$
  let $\bar Q(\bar X) = Q(\bar X,\bar 1) \in \bZ[\bar X]$.

  For $P(\bar X) \in \bZ[\bar X]$ let
  $\deg_{\bar X} P = (\deg_{X_0} P,\ldots,\deg_{X_{n-1}} P) \in \bN^n$
  and let
  $P^*(\bar X^*)
  = (\bar X^*)^{\deg_{\bar X} P}
  = \prod (X_i^*)^{\deg_{X_i} P}
  \in \bZ[\bar X^*]$,
  $P^h(\bar X,\bar X^*) = P(\frac{\bar X}{\bar X^*}) P^*(\bar X^*)$.
  Then $P^h \in \bZ[\bar X,\bar X^*]$
  is unique such that $P = \overline{P^h}$
  and no $X_i^*$ can be factored out of $P^h$.
  We call $P^h$ the \emph{homogenisation} of $P$ and observe that $P \mapsto P^h$ is multiplicative.
  Conversely, every $Q \in \bZ^h[\bar X]$
  can be written uniquely as
  $\bar Q^h\cdot(\bar X^*)^{\alpha(Q)}$,
  where $\alpha(\bar Q) \in \bN^n$ is a multi-exponent.
\end{ntn}

We now have everything we need to define the language and theory of (projective lines of) metric valued fields in ordinary (i.e., bounded) continuous logic, as presented in \cite{BenYaacov-Usvyatsov:CFO} or \cite{BenYaacov-Berenstein-Henson-Usvyatsov:NewtonMS}.

\begin{dfn}
  We define the language $\cL_{\bP^1}$ to consist of a constant symbol $\infty$ and one $n$-ary, $[0,1]$-valued predicate symbol $\|P(\bar x)\|$ for each $n$ and each polynomial $P \in \bZ[X_0,\ldots, X_{n-1}]$.
  (There is some abuse of notation here, since $P$ does not determine $n$ but this will not cause any problems.)
\end{dfn}

\begin{dfn}
  \label{dfn:KP1}
  For a valued field $(K,|{\cdot}|)$, we view $K\bP^1$ as an $\cL_{\bP^1}$-pre-structure by:
  \begin{gather*}
    \infty := [1:0],
    \qquad
    \|P(\bar \ba)\| := |P^h(\bar a,\bar a^*)|,
    \qquad
    d(\ba,\bb) := \|\ba-\bb\| = |ab^*-a^*b|.
  \end{gather*}
  This is independent of the choice of representatives, keeping mind that we only consider representatives for $[a:a^*] \in K\bP^1$ such that $|a| \vee |a^*| = 1$.
\end{dfn}

We observe that $|a^*| = \|\ba-\infty\| = d(\ba,\infty)$, and we shall use $\|x^*\|$ as an abbreviation for the formula $d(x,\infty)$.
For $P(\bar X) \in Z[\bar X]$ we have $|P^*(\bar a^*)| = \prod |a_i^*|^{\deg_{X_i} P}$, and we shall similarly use $\| P^*(\bar x) \|$ as an abbreviation for $\prod \|x_i^*\|^{\deg_{X_i} P}$.
We notice that $\|P(\bar \ba)\| = |P(\bar \ba)| \| P^*(\bar \ba) \|$ (if $\ba_i \in K \subseteq K\bP^1$ whenever $\deg_{X_i} P > 0$ then this
makes sense, and otherwise $\|P^*(\bar \ba)\| = 0$, and the identity still makes sense).

\begin{dfn}
  We define $MVF$, the theory of projective lines over
  metric valued fields, to consist of the following axioms.
  In axiom \autoref{ax:Perm}, $\sigma \in \fS_n$ is a permutation and $(X_0,\ldots,X_{n-1})^\sigma = (X_{\sigma 0},\ldots,X_{\sigma(n-1)})$.
  \begin{align*}
    \label{ax:Norm}
    \tag{Norm}
    & \|x\| \vee \|x^*\| = 1
    \\
    \label{ax:Perm}
    \tag{Perm}
    &
    \|P(\bar x)\| = \|Q(\bar x^\sigma,\bar y)\|
    &&
    \left( P(\bar X) = Q(\bar X^\sigma,\bar Y) \right)
    \\
    \label{ax:Ult}
    \tag{Ult}
    & \|\bar x^*\|^\alpha \|P(\bar x)\|
    \leq
    \|\bar x^*\|^\beta \|Q(\bar x)\|
    \vee
    \|\bar x^*\|^\gamma \|R(\bar x)\|
    && \bigl(
    (\bar X^*)^\alpha P^h = (\bar X^*)^\beta Q^h - (\bar X^*)^\gamma R^h
    \bigr)
    \\
    \label{ax:Prod}
    \tag{Prod}
    & \|(PQ)(\bar x)\| = \|P(\bar x)\|\|Q(\bar x)\|
    \\
    \tag{Dist}
    & d(x,y) = \|x-y\|
    \\
    \label{ax:Lin}
    \tag{Lin}
    & \exists y \,
    \|P(\bar x,y)\| = 0
    && \bigl( \deg_Y P(\bar X,Y) = 1 \bigr)
  \end{align*}
\end{dfn}

Axioms are universally quantified, so axiom \autoref{ax:Norm}, for example, should be understood as the sentence $\sup_x \, \bigl| 1 - \|x\| \vee \|x^*\| \bigr|$ (where we recall the convention of continuous logic, that zero is ``True''), and similarly for the other axioms which appear quantifier-free.
In the last axiom, the existential quantifier should be understood in the approximate sense: there exists $y$ such that $\|P(\bar x,y)\|$ is as close as desired to zero, or formally, $\sup_{\bar x} \, \inf_y \, \|P(\bar x,y)\|$.
In continuous logic one simply cannot express directly the existence of some $y$ such that something holds precisely (e.g., such that $\|P(\bar x,y)\|$ is precisely zero), although in concrete situations one can prove that approximate existence implies precise existence, as is the case with \autoref{lem:UniqueQuotient} below.

It follows immediately from the axioms that $\|P\| = \|{-P}\|$ and $\|\infty\| = 1$.

\begin{lem}
  \label{lem:UniqueQuotient}
  Assume that $\cM \vDash MVF$.
  Then for every $P,Q \in \bZ[\bar X]$ and every $\bar a \in M^n$, if $\|P(\bar a)\|  \|Q^*(\bar a)\| \neq 0$ then there exists a unique $b \in M$ such that $\|P(\bar a)b - Q(\bar a)\| = 0$, i.e., $\|R(\bar a,b)\| = 0$ where $R = PY-Q$.
  Moreover, this $b$ is distinct from $\infty$.
\end{lem}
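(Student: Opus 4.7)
The plan is to handle existence, uniqueness, and $b \neq \infty$ separately, all driven by the linear polynomial $R(\bar X, Y) = P(\bar X) Y - Q(\bar X)$. Existence is immediate from axiom (Lin), since $\deg_Y R = 1$, which directly yields some $b \in M$ with $\|R(\bar a, b)\| = 0$.

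For uniqueness, suppose both $b_1, b_2$ satisfy $\|R(\bar a, b_i)\| = 0$. The key polynomial identity is $R(\bar X, Y_1) - R(\bar X, Y_2) = P(\bar X)(Y_1 - Y_2)$. Homogenising both sides as members of $\bZ^h[\bar X, Y_1, Y_2]$, writing $\bar d = \max(\deg_{\bar X} P, \deg_{\bar X} Q)$ coordinatewise and $S = P(Y_1 - Y_2)$, a direct computation gives
\[
(\bar X^*)^{\bar d - \deg_{\bar X} P}\, S^h = Y_2^* \cdot R^h(\bar X, \bar X^*, Y_1, Y_1^*) - Y_1^* \cdot R^h(\bar X, \bar X^*, Y_2, Y_2^*).
\]
The asymmetric-looking $Y_i^*$ factors appear precisely because, viewed as polynomials in the triple $(\bar X, Y_1, Y_2)$, each $R(\bar X, Y_i)$ has degree $0$ in $Y_{3-i}$ while $S$ has degree $1$ in both. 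Feeding this identity into (Ult) and using (Prod)+(Dist) to rewrite $\|S(\bar a, b_1, b_2)\| = \|P(\bar a)\|\, d(b_1, b_2)$, I obtain
\[
\|\bar a^*\|^{\bar d - \deg_{\bar X} P}\, \|P(\bar a)\|\, d(b_1, b_2) \leq \|b_2^*\|\, \|R(\bar a, b_1)\| \vee \|b_1^*\|\, \|R(\bar a, b_2)\| = 0.
\]
The hypothesis then matches up exactly: $\|P(\bar a)\| \neq 0$ handles the obvious factor, while $\|Q^*(\bar a)\| \neq 0$ forces $\|a_i^*\| \neq 0$ whenever $\deg_{X_i} Q > 0$, and in particular whenever $d_i - \deg_{X_i} P > 0$. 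Hence $d(b_1, b_2) = 0$.

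For $b \neq \infty$, specialising $(Y, Y^*) = (1, 0)$ in the homogenisation of $R$ collapses $R^h$ to $P^h(\bar a, \bar a^*)\,(\bar a^*)^{\bar d - \deg_{\bar X} P}$, so $\|R(\bar a, \infty)\| = \|P(\bar a)\|\, \|\bar a^*\|^{\bar d - \deg_{\bar X} P} \neq 0$ by the same argument; by uniqueness the root therefore lies in $M \setminus \{\infty\}$. I expect the main technical obstacle to be the homogenisation bookkeeping needed to turn the naive identity $R(\bar X, Y_1) - R(\bar X, Y_2) = P(\bar X)(Y_1 - Y_2)$ into a $\bZ^h$-identity of the precise shape demanded by (Ult); the hypothesis $\|Q^*(\bar a)\| \neq 0$ is essentially the minimal non-vanishing condition on $\bar a^*$ that this bookkeeping requires, which is why it appears in the statement alongside $\|P(\bar a)\| \neq 0$.
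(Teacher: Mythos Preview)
Your uniqueness argument and the computation of $\|R(\bar a,\infty)\|$ are correct and essentially identical to the paper's (the paper writes $\alpha = \deg_{\bar X} Q \dotminus \deg_{\bar X} P$ for your $\bar d - \deg_{\bar X} P$ and derives the same $\bZ^h$-identity). The genuine gap is in existence. In continuous logic the axiom (Lin), i.e.\ $\exists y\,\|R(\bar x,y)\| = 0$, is interpreted as $\inf_y \|R(\bar x,y)\| = 0$; it does \emph{not} hand you a $b$ with $\|R(\bar a,b)\| = 0$ directly, only a sequence $(b_n)$ with $\|R(\bar a,b_n)\| \to 0$. So ``existence is immediate from (Lin)'' is not justified.

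The repair is already implicit in what you wrote for uniqueness. The (Ult) inequality you derived is in fact
\[
\|\bar a^*\|^{\alpha}\,\|P(\bar a)\|\, d(y,z) \;\leq\; \|z^*\|\,\|R(\bar a,y)\| \vee \|y^*\|\,\|R(\bar a,z)\|
\]
for \emph{arbitrary} $y,z$, not only for exact roots. Since $\|P(\bar a)\|\,\|\bar a^*\|^{\alpha} \geq \|P(\bar a)\|\,\|Q^*(\bar a)\| > 0$, applying this to $b_m,b_n$ shows the sequence of approximate witnesses is Cauchy; completeness of the model then produces a limit $b$, and continuity of $\|R(\bar a,\cdot)\|$ gives $\|R(\bar a,b)\| = 0$. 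This is exactly the paper's route: it establishes the quantitative distance bound first, reads off uniqueness as a special case, and then uses the same bound to upgrade the approximate (Lin)-witnesses to an exact solution.
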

\begin{proof}
  Let $\alpha = \deg_{\bar X} Q \dotminus \deg_{\bar X} P$, $\beta = \deg_{\bar X} P \dotminus \deg_{\bar X} Q$, so $R^h = (\bar X^*)^\alpha P^h Y - (\bar X^*)^\beta Y^* Q^h$.
  Then
  \begin{gather*}
    (\bar X^*)^\alpha \bigl[ (Y-Z) P\bigr]^h
    =
    Z^* R(\bar X,Y)^h - Y^* R(\bar X,Z)^h.
  \end{gather*}
  By the ultra-metric axiom \autoref{ax:Ult}:
  \begin{align*}
    d(y,z)
    &
    \leq
    \frac{\|R(\bar a,y)\|\|z^*\| \vee \|R(\bar a,z)\|\|y^*\|}
    {\|P(\bar a)\|\|\bar a^*\|^\alpha}
    \leq
    \frac{\|R(\bar a,y)\| \vee \|R(\bar a,z)\|}
    {\|P(\bar a)\|\|Q^*(\bar a)\|}.
  \end{align*}
  Uniqueness follows.
  By the linear solution axiom \autoref{ax:Lin} there exists a sequence $(b_n)$ such that $\|P(\bar a)b_n - Q(\bar a)\| \to 0$.
  It follows from our argument above that this is a Cauchy sequence, and its limit $b$ is a solution.
  Finally, $\|R(\bar a,\infty)\| = \|P(\bar a)\|\|\bar a^*\|^\alpha \neq 0$, so $b \neq \infty$.
\end{proof}

When $b$ is as in the lemma we write $b = \frac{Q(\bar a)}{P(\bar a)}$, and if $P = 1$ we write $b = Q(\bar a)$.

\begin{thm}
  An $\cL_{\bP^1}$-structure is a model of $MVF$ if and only if it is isomorphic to $K\bP^1$ for some complete valued field $K$.
\end{thm}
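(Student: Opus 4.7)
The forward direction is a routine check: for $K$ a complete valued field, (Norm), (Comm), (Dist) hold by construction, (Prod) by multiplicativity of $|\cdot|$, and (Ult) by applying the non-Archimedean inequality $|u - v| \leq |u| \vee |v|$ to the identity $(\bar X^*)^\alpha P^h = (\bar X^*)^\beta Q^h - (\bar X^*)^\gamma R^h$ at normalized representatives. For (Lin), write $P(\bar X, Y) = R(\bar X) Y - S(\bar X)$: either $R(\bar a) \neq 0$ and $y = S(\bar a)/R(\bar a) \in K$ works, or $R(\bar a) = 0$ and $y = \infty$ works, since $P^h$ vanishes at $(Y, Y^*) = (1, 0)$.

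For the reverse direction, assume $\cM \models MVF$. First, (Lin) applied to $P(Y) = Y$ yields some $0 \in M$ with $\|0\| = 0$; applying (Ult) with $P = X - Y$, $Q = X$, $R = Y$ gives $\|x - y\| \leq \|y^*\| \|x\| \vee \|x^*\| \|y\|$, forcing uniqueness of such $0$, and (Norm) then gives $\|0^*\| = 1$, so $0 \neq \infty$. Set $K = M \setminus \{\infty\} = \{x : \|x^*\| > 0\}$. Using the Lemma (with $P = 1$), define for $a, b \in K$:
\[
a + b,\ ab,\ -a \quad \text{as the unique } y \in M \text{ with } \|y - a - b\| = 0,\ \|y - ab\| = 0,\ \|y + a\| = 0,
\]
each of which lies in $K$ by the Lemma's moreover-clause. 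For $a \in K$ with $\|a\| \neq 0$, the Lemma applied to $P = X$, $Q = 1$ defines $a^{-1}$.

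Each field axiom reduces to an identity $\|L(\bar a) - R(\bar a)\| = 0$, which follows by iteratively applying (Ult) to the tautological polynomial identity $L - R = 0 \in \bZ[\bar X]$ together with the defining equations of the intermediate terms; commutativity comes from (Comm). Define the valuation $|a| = \|a\|/\|a^*\|$ on $K$ (well-defined by (Norm)); (Prod) yields $|ab| = |a||b|$, and (Ult) applied to the bi-homogenisation of $X + Y$ yields $|a + b| \leq |a| \vee |b|$ after dividing by the positive factor $\|a^*\| \|b^*\|$. Completeness of $(K, |\cdot|)$ follows from completeness of $(M, d)$ via the relation $d(a, b) = |a - b|/(\max(1, |a|) \max(1, |b|))$: by ultra-metricity, $|a_n|$ is eventually constant along any $|\cdot|$-Cauchy sequence, so $\|a_n^*\|$ is bounded below and the $d$-limit lies in $K$.

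Finally, the bijection $\cM \to K\bP^1$ acting as the identity on $K$ and sending $\infty \mapsto [1:0]$ is an isomorphism of $\cL_{\bP^1}$-structures: the atomic predicates $\|x\|$ and $\|x^*\|$ match by a case analysis on $|a|_K \leq 1$ versus $|a|_K > 1$, using (Norm) in each case, and the general predicate $\|P(\bar a)\|$ matches via the identity $\|P(\bar a)\| = |P(\bar a)|_K \cdot \|P^*(\bar a)\|$ recorded in the excerpt, which both sides satisfy by (Prod). The principal obstacle lies in the field-axiom step: one must homogenise each polynomial identity and track the multi-exponents $\alpha, \beta, \gamma$ of $\bar X^*$ appearing in (Ult), justifying that the resulting factor $\|\bar a^*\|^\alpha > 0$ may be divided through --- which is possible precisely because we have restricted to $K = M \setminus \{\infty\}$.
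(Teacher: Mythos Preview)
Your proof is correct and follows essentially the same approach as the paper's: define $K = M \setminus \{\infty\}$, use the preceding Lemma to construct the field operations, set $|a| = \|a\|/\|a^*\|$, and verify that $\cM = K\bP^1$. The paper's own proof is in fact much terser than yours---it simply declares that ``it is straightforward to verify'' the field axioms and the identification $\cM = K\bP^1$---so your write-up supplies detail the paper omits, including the completeness argument and the explicit reduction of the field axioms to instances of (Ult). Two minor imprecisions worth tightening: in the forward (Lin) check, the phrase ``$P^h$ vanishes at $(Y,Y^*)=(1,0)$'' should read that the $Y^*$-term vanishes and the surviving term is a multiple of $R^h(\bar a,\bar a^*)$, which is zero when $R(\bar a)=0$; and in the completeness step, $|a_n|$ need not be eventually constant when $a_n \to 0$, though your conclusion that $\|a_n^*\|$ is bounded below still holds since a $|\cdot|$-Cauchy sequence has bounded valuation.
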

\begin{proof}
  Only one direction requires a proof.
  Assume therefore that $\cM \vDash MVF$.
  Let $K = M \setminus \{\infty\}$.
  For $a,b \in K$, and with the notation above, $a+b = \frac{a+b}{1}$ is the unique solution for $\|Y - a-b\| = 0$.
  We may similarly define $ab$, $-a$, as well as the constants $0$ and $1$, and since $\|a^*\| \neq 0$ we may also define $|a| = \frac{\|a\|}{\|a^*\|}$.

  Let us check that $(K,0,1,-,+,\cdot,|{\cdot}|)$ is a valued field.
  For this purpose, we shall use brackets to enclose expressions involving the field operations of $K$, whereas expressions outside brackets correspond to polynomials over $\bZ$.
  Axiom \autoref{ax:Perm} ensures that we need not worry about the order of variables in a polynomial nor about dummy variables, and will be used implicitly throughout.

  In order to see that addition is associative, for example, observe that
  \begin{gather*}
    X^*(W - Y - Z - T)^h  = Y^*Z^*(W - X - T)^h + W^*T^*(X - Y - Z)^h.
  \end{gather*}
  Then by \autoref{ax:Ult} and the fact that $\|P\| = \|-P\|$, that for all $a,b,c \in K$,
  \begin{gather*}
    \bigl\| [a+b]^* \bigr\| \bigl\| [(a+b)+c] - a - b - c \bigr\| = 0
    \qquad \Longrightarrow \qquad
    \bigl\| [(a+b)+c] - a - b - c \bigr\| = 0.
  \end{gather*}
  A similar argument yields $\bigl\| [a+(b+c)] - a - b - c \bigr\| = 0$.
  It follows from the uniqueness clause of \autoref{lem:UniqueQuotient} that $[(a+b) + c] = [a + (b + c)]$.
  Similarly,
  \begin{gather*}
    X^*(W - YZT)^h  = Y^*Z^*(W - XT)^h + W^*(XT - YZT)^h.
  \end{gather*}
  Using also axiom \autoref{ax:Prod} we obtain $\bigl\| [(ab)c] - abc \bigr\| = 0$, and similarly $\bigl\| [a(bc)] - abc \bigr\| = 0$, concluding that $[(ab)c] = [a(bc)]$.

  Proceeding in this manner, we show that $\bigl\| [P(\bar a)] - P(\bar a) \bigr\| = 0$ for every $\bar a \in K$, polynomial $P(\bar X) \in \bZ[\bar X]$ and ring language term $[P]$ which evaluates to $P$ in rings.
  In particular $[P(\bar a)]$ only depends on $P$ and not on the choice of $[P]$, whence it follows that $K$ is a ring.
  If $a \in K \setminus \{0\}$ then $\|a\| = \|a - 0\| > 0$, so $b = \frac{1}{a}$ exists.
  Thus $\bigl\| [ab] - 1 \bigl\| = 0 = \bigl\| [1] - 1 \bigl\|$, whereby $\bigl\| [ab] - [1] \bigl\| = 0$ and $[ab] = [1]$, so $K$ is a field.

  The identity $\bigl\| [P(\bar a)] - P(\bar a) \bigr\| = 0$ also implies that $\bigl\| P^*(\bar a) \bigr\| \bigl\| [P(\bar a)] \bigr\| = \bigl\| [P(\bar a)]^* \bigr\| \bigl\| P(\bar a) \bigr\|$, or $\bigl| [P(\bar a)] \bigr| = \frac{\| P(\bar a) \|}{\| P^*(\bar a) \|}$.
  By axiom \autoref{ax:Prod} it follows that $|[ab]| = \frac{\|a\|\|b\|}{\|a^*\|\|b^*\|} = |a||b|$.
  Similarly, with axiom \autoref{ax:Ult} we have $|[a+b]| = \frac{\|a+b\|}{\|a^*|\|b^*\|} \leq \frac{\|b^*\|\|a\| + \|a^*\|\|b\|}{\|a^*|\|b^*\|} = |a| + |b|$.
  It follows that $K$ is a valued field, and that the interpretation of the symbols $\|P\|$ is as intended, completing the proof.
\end{proof}

The problem with extending multiplication to the projective line arises with expressions close to $0 \cdot \infty$, i.e., when trying to multiply points which are close to $0$ with points which are close to $\infty$.
This situation cannot happen when taking powers, and indeed,

\begin{lem}
  \label{lem:DefinablePower}
  For $n \in \bZ$, the operation $x \mapsto x^n$ is uniformly
  definable in models of $MVF$.
  This is under the convention that
  $0^0 = \infty^0 = 1$, $\infty^n = \infty$ for $n > 0$,
  and $\infty^n = 0$, $0^n = \infty$ for $n < 0$.
\end{lem}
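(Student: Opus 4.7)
The plan is to exhibit for each $n \in \bZ$ a single polynomial $P_n \in \bZ[X,Y]$ such that the basic predicate $\|P_n(x,y)\|$ equals $d(y,x^n)$ identically. Once this is established, uniform definability is immediate: the graph of $x \mapsto x^n$ is the zero-set of a single $\cL_{\bP^1}$-formula, with trivial modulus of continuity, and the same formula works in every model.

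First I would set up the map explicitly in homogeneous coordinates. For $x = [a:a^*]$ with $|a| \vee |a^*| = 1$, one takes $x^n := [a^n : (a^*)^n]$ for $n \geq 0$ and $x^n := [(a^*)^{-n} : a^{-n}]$ for $n < 0$. Multiplicativity of the valuation preserves normalisation, so these representatives are already admissible; substituting $a = 0$ or $a^* = 0$ recovers precisely the stated conventions at $0$ and $\infty$. Notice there is no obstruction analogous to the $0 \cdot \infty$ pathology obstructing multiplication, because the map depends on a single input only, and the expressions $a^n, (a^*)^n$ cannot both vanish simultaneously unless $n$ itself forces it.

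Next I would choose the polynomials: $P_0 = Y-1$, $P_n = Y - X^n$ for $n > 0$, and $P_n = X^{-n}Y - 1$ for $n < 0$. For each, the homogenisation recipe of the notation block gives a short formula --- for instance $P^h = Y(X^*)^n - X^n Y^*$ when $P = Y - X^n$ --- and the resulting basic predicate $\|P_n(x,y)\|$ is literally $|b(a^*)^n - b^* a^n|$ (or its analogue). On the other hand, the distance $d(y,x^n)$, computed via $d([u_0:u_1],[v_0:v_1]) = |u_0 v_1 - u_1 v_0|$ applied to the normalised representatives $y = [b:b^*]$ and $x^n = [a^n:(a^*)^n]$, gives exactly the same quantity. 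The case $n < 0$ reduces to $n > 0$ under the symmetry $a \leftrightarrow a^*$, i.e., the involution $x \leftrightarrow 1/x$ on $\bP^1$.

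The only real obstacle is the bookkeeping: checking that $[a^n:(a^*)^n]$ is indeed the normalised representative, and that the homogenisation of $P_n$ in the paper's conventions matches the distance formula coefficient for coefficient. No analytic estimate or use of axiom (Lin) is required, since we obtain $\|P_n(x,y)\| = d(y,x^n)$ on the nose rather than merely up to a bound.
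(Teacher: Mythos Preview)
Your proposal is correct and follows essentially the same route as the paper: exhibit, for each $n$, a polynomial $P_n$ such that $\|P_n(x,y)\| = d(y,x^n)$ on the nose. The paper records exactly the formulas $d(y,x^n) = \|x^n - y\|$ for $n>0$ and $d(y,x^{-n}) = \|1 - x^n y\|$, matching your $P_n = Y - X^n$ and $P_{-n} = X^n Y - 1$; your treatment of $n=0$ via $P_0 = Y-1$ is in fact cleaner than the paper's brief (and arguably mis-stated) formula for that case.
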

\begin{proof}
  Indeed, $\ba^n = [a^n:(a^*)^n]$ and $|a^n| \vee |(a^*)^n| = 1$.
  It follows that
  $d(y,x^n) = \| x^n-y \|$,
  and similarly
  $d(y,x^0) = \|y^*\|$,
  $d(y,x^{-n}) = \| 1 - x^ny \|$.
\end{proof}

It is natural to ask whether other projective spaces $K\bP^n$,
for $n > 1$, have more (or less) structure than the projective line.
In order to give a precise meaning to this question, we should first
define the projective spaces as metric structures.
It will be most convenient to define the entire family
$(K\bP^n)_n$ as a single multi-sorted structure $K\bP$.

\begin{dfn}
  The signature $\cL_\bP$ consists of $\aleph_0$ many sorts
  $\{\bP^n\}_{n\in\bN}$.
  They are equipped with the following symbols:
  \begin{itemize}
  \item For each $n,m$ a function symbol
    $\otimes\colon \bP^n \times \bP^m \to \bP^{n+m+nm}$.
  \item For each $A \in SL_{n+1}(\bZ)$ (or in some generating subset),
    a function symbol
    $A\colon \bP^n \to \bP^n$.
  \item For each $n$ a predicate symbol $\|{\cdot}\|$ on $\bP^n$.
  \end{itemize}
\end{dfn}

\begin{dfn}
  \label{dfn:KP}
  Let $(K,|{\cdot}|)$ be any valued field.
  We define an $\cL_\bP$-pre-structure $K\bP$ as follows:
  \begin{itemize}
  \item The sort $\bP^n$ consists of the projective space
    $K\bP^n$, namely the quotient of $K^{n+1} \setminus\{0\}$ by
    $K^\times$.
    The equivalence class of $(a_0,\ldots,a_n)$ will be denoted
    $\ba = [a] = [a_i]_i = [a_0,\ldots,a_n]$.
    We may, and shall, assume that each representative satisfies
    $\bigvee |a_i| = 1$.
  \item
    For $n,m \in \bN$,
    we fix some natural isomorphism
    $K^{n+1} \otimes K^{m+1} \cong K^{(n+1)(m+1)}$,
    say the one given by $(a \otimes b)_{i + (n+1)j} = a_ib_j$.
    We then interpret $\otimes$ as the Segre embedding
    $[a] \otimes [b] = [a \otimes b] = [a_ib_j]_{i\leq n,j\leq m}$.
  \item
    For $A \in SL_{n+1}(\bZ)$, the corresponding function symbol
    acts on $K\bP^n$ naturally via its action on
    $K^{n+1} \setminus \{0\}$.
  \item We interpret:
    \begin{gather*}
      \bigl\| \ba \bigr\| = |a_0|.
    \end{gather*}
  \item The distance on $K\bP^n$ is interpreted as:
    \begin{gather*}
      d( \ba, \bb )
      = \bigvee_{i<j<n} |a_ib_j-a_jb_i|.
    \end{gather*}
  \end{itemize}
  Notice that on $K\bP^1$, the interpretation of $\|x\|$ and $d(x,y)$
  is consistent with that given in \autoref{dfn:KP1}.
\end{dfn}

We need check that the distance defined above is indeed an ultra-metric distance function.
Clearly it only depends on the equivalence classes $\ba$ and $\bb$.
One checks easily that $d(\ba,\bb) = 0$ if and only if $\ba = \bb$.
Symmetry is immediate.
We are left with checking the ultra-metric triangle inequality.
Let $\ba,\bb,\bc \in K\bP^n$, and fix $j_0$ such that $|b_{j_0}| = 1$.
For all $i$ and $k$ we then have:
\begin{align*}
  |a_ic_k-a_kc_i|
  & = |a_ib_{j_0}c_k - a_{j_0}b_ic_k + a_{j_0}b_ic_k - a_{j_0}b_kc_i + a_{j_0}b_kc_i - a_kb_{j_0}c_i|
  \\ &
  \leq
  |c_k||a_ib_{j_0} - a_{j_0}b_i| \vee |a_{j_0}||b_ic_k - b_kc_i| \vee |c_i||a_{j_0}b_k - a_kb_{j_0}|
  \\ &
  \leq d(\ba,\bb) \vee  d(\bb,\bc).
\end{align*}

In order to show that $\bP^n$ is interpretable in $\bP^1$ we shall attempt to repeat the standard trick of covering $\bP^n$ with $n+1$ affine charts.
The problem is that $\bA^n$ is not definable, or even type-definable, in $\bP$, so we shall have to make do with $n+1$ copies of $(\bP^1)^n$ instead.
As above, a point $\ba \in \bP^1$ is viewed as $[a:a^*]$ where $|a|\vee|a^*| = 1$.
It is either equal to $\infty = [1:0]$ or else can be identified with $\frac{a}{a^*} \in K$.
Agreeing that $|\infty| = \infty$ we have $|\ba| \leq 1$ if and only if $|a^*| = 1$.
As in \autoref{lem:DefinablePower} we also have $\ba^{-1} = [a^*:a]$.

Let $M = \frac{n(n+1)}{2}$.
Given a tuple $\bar \ba = (\ba_{ij})_{i < j \leq n} \in (\bP^1)^M$ let $\ba_{ii} = 1 = [1:1]$ and $\ba_{ji} = \ba_{ij}^{-1}$, and consider the matrix
\begin{gather}
  \label{eq:PnMatrix}
  \bigl( \ba_{ij} \bigr)_{i,j \leq n}
  =
  \begin{pmatrix}
    1 & \ba_{0,1} & & \cdots &  & \ba_{0,n} \\
    \ba_{0,1}^{-1} & 1 & & \cdots & & \ba_{1,n} \\
     &  & 1 & & &  \\
    \vdots & \vdots &  & 1 & & \vdots \\
    & &  &  & 1 &  \\
    \ba_{0,n}^{-1} & \ba_{1,n}^{-1} &  & \cdots &  & 1
  \end{pmatrix}
\end{gather}
Intuitively, we wish to consider such matrices whose rows represent identical points in the standard affine charts for $\bP^n$, i.e., such that
\begin{gather*}
  [1:\ba_{0,1}:\ldots:\ba_{0,n}]
  =
  [\ba_{1,0}:1:\ba_{1,2}:\ldots:\ba_{1,n}]
  = \ldots =
  [\ba_{n,0}:\ldots:\ba_{n,n-1}:1].
\end{gather*}
These precise identities are meaningless, since some of the $\ba_{ij}$ may be $\infty$, but we may nonetheless express them formally by the system of equations
\begin{gather*}
  X_{ij}X_{jk} = X_{ik}
  \qquad
  (i < j < k \leq n),
\end{gather*}
which are homogenised into
\begin{gather*}
  X_{ij}X_{jk}X_{ik}^* = X_{ik}X_{ij}^*X_{jk}^*
  \qquad
  (i < j < k \leq n).
\end{gather*}

The following asserts that the solutions to these equations form a well-behaved (definable) set, and that this set covers $\bP^n$.
We recall from \cite{BenYaacov-Berenstein-Henson-Usvyatsov:NewtonMS} or \cite[Fact~1.7]{BenYaacov:DefinabilityOfGroups} that in continuous logic, a subset $X \subseteq M^n$ is called a \emph{definable set} if it is closed and the distance predicate $d(X,\bar x)$ is definable.
This has several equivalent characterisations, among which the existence of a definable predicate $\varphi(\bar x)$ such that $d(X,\bar x) \leq \varphi(\bar x)$ and such that the zero set of $\varphi$ is exactly $X$.
That the latter property implies the former uses quantification, and when dealing with quantifier-free definability the two properties need no longer be equivalent.
The latter one is more robust, and in particular can be shown to still hold if we replaced the ambient distance with an equivalent definable one, so it is it we shall use.

\begin{dfn}
  \label{dfn:QFDefinableSet}
  We shall say that a set $X$ is \emph{quantifier-free definable} if there exists a quantifier-free definable predicate (i.e., a uniform limit of quantifier-free formulae) $\varphi(\bar x)$ such that, first, $X$ is the zero set of $\varphi$, and second, $d(X,\bar x) \leq \varphi(\bar x)$.
\end{dfn}

\begin{lem}
  \label{lem:PnCovering}
  Let $E \subseteq (\bP^1)^M$ consist of all tuples satisfying the homogeneous equations above.
  \begin{enumerate}
  \item The set $E$ is quantifier-free definable.
  \item For every tuple $\bar \ba \in E$ there exists $\ell \leq n$ such that in the $\ell$th row of the matrix \autoref{eq:PnMatrix} all entries are finite of value $\leq 1$.
  \item Let $\ell$ be as in the previous item, and let $\bb \in \bP^n$ be the class of the $\ell$th row, i.e., $\bb = [\ba_{\ell,0}:\ldots:\ba_{\ell,\ell-1}:1:\ba_{\ell,\ell+1}:\ldots:\ba_{\ell,n}]$.
    Then $\bb$ is the unique solution for the following system of homogeneous equations
    \begin{gather*}
      a_{ij}Y_i = a_{ij}^*Y_j \qquad (i<j).
    \end{gather*}
    Conversely, every $\bb \in \bP^n$ arises in this manner (for some $\bar \ba \in E$).
  \end{enumerate}
\end{lem}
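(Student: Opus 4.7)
My plan is to treat the three parts in the order $(2)\to(3)\to(1)$: the row-selection of (2) and the explicit reconstruction of (3) together supply the quantifier-free witness needed in (1).

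For (2), I put on $\{0,\ldots,n\}$ the relation $i \preceq j$ iff $|a^{ij}| \leq 1$. Totality is automatic from $a^{ji} = (a^{ij})^{-1}$. Transitivity comes from the defining equation of $E$, $a_0^{ij} a_0^{jk} a_1^{ik} = a_1^{ij} a_1^{jk} a_0^{ik}$: when $|a^{ij}|, |a^{jk}| \leq 1$ the normalisation forces $|a_1^{ij}| = |a_1^{jk}| = 1$, so reading valuations gives $|a_0^{ik}| \leq |a_1^{ik}|$, i.e.\ $|a^{ik}| \leq 1$ (a direct inspection of the equation handles the entries equal to $0$ or $\infty$). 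Any $\preceq$-maximal $\ell$ then has $|a^{\ell,j}| \leq 1 < \infty$ for all $j$, giving the row.

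For (3), existence is immediate: the $\ell$th row provided by (2) is already a normalised representative of a point $b \in \bP^n$, and the equations $a_0^{ij} Y_i = a_1^{ij} Y_j$ at $Y_m = a^{\ell,m}$ reduce to the $E$-equation on the triple $(\ell,i,j)$ (after possibly using $a^{ji} = (a^{ij})^{-1}$ to reorder the indices). Uniqueness: if $b'$ also solves the system then $a_1^{\ell,j} \neq 0$ (since $|a^{\ell,j}| \leq 1$), so each equation $a_0^{\ell,j} b'_\ell = a_1^{\ell,j} b'_j$ determines $b'_j$ from $b'_\ell$, pinning down $[b']$ in $\bP^n$. For the converse, given $b = [c_0:\ldots:c_n]$, pick $\ell$ with $|c_\ell| = 1$ and set $a^{ij} = [c_j c_\ell : c_\ell c_i]$ (reduced by common factors; degenerate pairs with $c_i=c_j=0$ can be filled in arbitrarily without breaking the equations). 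The defining equations of $E$ then reduce to the tautology $c_i c_j c_k = c_i c_j c_k$, and $b$ solves the system by construction.

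For (1), I take $\varphi(\bar x) = \bigvee_{i<j<k} \|x^{ij}x^{jk} - x^{ik}\|$: quantifier-free by definition, and with zero set exactly $E$ by the explicit form of $\|\cdot\|$ on $\bP^1$. For the distance bound, given $\bar a$ with $\varphi(\bar a) \leq \varepsilon$, I reuse the construction from the converse of (3). The preorder $\preceq$ remains total on $\bar a$ and transitivity now fails only up to $\varepsilon$, still enough to select $\ell$ with $|a^{\ell,j}| \leq 1$ for all $j$. Defining $\bar a' \in E$ by $a'^{\ell,j} = a^{\ell,j}$ and $a'^{ij} = [a_0^{\ell,j} a_1^{\ell,i} : a_0^{\ell,i} a_1^{\ell,j}]$ for $i,j \neq \ell$, each coordinate distance $d(a^{ij}, a'^{ij})$ unfolds into a quantity of the form $\|a^{ij} a^{\ell,i} - a^{\ell,j}\|$, which is a triangle residue bounded by $\varphi(\bar a)$. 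The main obstacle is precisely this step: producing a clean uniform bound on $d(a^{ij}, a'^{ij})$ in the strata where some entries degenerate to $0$ or $\infty$, and verifying that approximate transitivity suffices to pick a usable $\ell$. Once that is done, everything else is direct linear algebra in the homogenised coordinates.
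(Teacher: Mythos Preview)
Your treatment of items (2) and (3) is essentially the paper's (modulo the slip ``$\preceq$-maximal'' where you mean $\preceq$-minimal). The genuine gap is in item (1), and it is precisely the obstacle you flag but do not resolve. When you unfold $d(a^{ij},a'^{ij})$ you have forgotten the normalisation: writing $a'^{ij}=[a_0^{\ell,j}a_1^{\ell,i}:a_0^{\ell,i}a_1^{\ell,j}]$ and noting $|a_1^{\ell,i}|=|a_1^{\ell,j}|=1$, the representative must be divided by $s=|a_0^{\ell,i}|\vee|a_0^{\ell,j}|$, and one gets
\[
d(a^{ij},a'^{ij})=s^{-1}\,\|a^{\ell,i}a^{ij}-a^{\ell,j}\|\leq \varphi(\bar a)/s,
\]
not $\varphi(\bar a)$. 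When both $a^{\ell,i}$ and $a^{\ell,j}$ are close to~$0$ this blows up, and no other row is available as $\ell$. Concretely, with $n=2$ and residue characteristic $\neq 2$, take $a^{01}=a^{02}=\pi$ of small value $\varepsilon$ and $a^{12}=2$: then $\varphi(\bar a)=\varepsilon$, the only row with all entries $\leq 1$ is $\ell=0$, and your construction gives $a'^{12}=[\pi:\pi]=1$, at distance~$1$ from $a^{12}$. A point of $E$ at distance $\varepsilon$ does exist (keep $a^{01},a^{12}$ and set $c^{02}=2\pi$), but your construction cannot find it.

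The paper's remedy is to first sort the indices so that the \emph{entire} upper triangle has value $\leq 1$ (your preorder is total and, as the paper checks directly, genuinely transitive whenever $\varphi(\bar a)<1$, so this is just linearising it). Then, rather than dividing by a fixed row, it multiplies along the super-diagonal: $c^{ij}=\prod_{i\leq k<j}a^{k,k+1}$. All factors lie in the closed unit ball, where multiplication is $1$-Lipschitz, and a short induction on $j-i$ using $d(c^{i,j+1},a^{i,j+1})\leq d(c^{ij}a^{j,j+1},a^{ij}a^{j,j+1})\vee\|a^{ij}a^{j,j+1}-a^{i,j+1}\|$ gives $d(a^{ij},c^{ij})\leq\varphi(\bar a)$ with no degenerate strata to worry about. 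That telescoping-product idea is what your argument is missing.
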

\begin{proof}
  We define
  \begin{gather*}
    \varphi(\bar x)
    =
    \bigvee_{i < j < k}
    \|x_{ij}x_{jk} - x_{ik}\|.
  \end{gather*}
  Then $E$ is the zero set of $\varphi$, and we claim that $d(\bar x,E) \leq \varphi(\bar x)$, which is enough for the first item.
  Indeed, assume that $\bar \ba \notin E$, so $\varphi(\bar \ba) = r > 0$, and we wish to show that $d(\bar \ba,E) \leq r$.
  If $r = 1$ then there is nothing to show.
  We may therefore assume that $r < 1$.
  It will be convenient to work with the entire matrix \autoref{eq:PnMatrix} rather than with its upper triangle.
  Observe that passing to the whole matrix does not change our basic hypothesis, i.e., $\bigvee_{i < j < k \leq n} \|\ba_{ij}\ba_{jk} - \ba_{ik}\| = \bigvee_{i,j,k \leq n} \|\ba_{ij}\ba_{jk} - \ba_{ik}\|$.
  If we apply a permutation of $n+1$ both to the rows and columns of the matrix, the resulting matrix will still have the same properties (namely $\ba_{ij} = \ba_{ji}^{-1}$ and $\bigvee_{i,j,k \leq n} \|\ba_{ij}\ba_{jk} - \ba_{ik}\| \leq r$).

  We first claim that if $\varphi(\bar \ba) = r < 1$ then the matrix possesses a row, say the $\ell$th, such that $|\ba_{\ell j}| \leq 1$ for all $j \leq n$.
  In order to prove the claim it will be enough to show that if the $i$th row does not have this property, say because $|\ba_{ij}| > 1$, then in the $j$th row there are strictly more entries than in the $i$th with value $\leq 1$.
  Indeed, assume that $|\ba_{ik}| \leq 1$ and let us show that $|\ba_{jk}| \leq 1$ as well.
  By assumption we have
  \begin{gather*}
    |a_{ij}a_{jk} a_{ik}^* - a_{ik} a_{ij}^* a_{jk}^*|
    = \|\ba_{ij}\ba_{jk} - \ba_{ik}\|
    \leq r < 1.
  \end{gather*}
  We also assume that $|a_{ik}^*| = 1$ and $|a_{ij}^*| < 1 = |a_{ij}|$, whereby $|a_{ij}a_{jk} a_{ik}^*| = |a_{jk}|$ and $|a_{ik} a_{ij}^* a_{jk}^*| \leq |a_{ij}^*| < 1$.
  Since the difference has value $< 1$ we must have $|a_{jk}| < 1$ as well, so $|a_{jk}^*| = 1$ and $|\ba_{jk}| = |a_{jk}| < 1$.
  In addition we have $|\ba_{jj}| = 1 < |\ba_{ij}|$, which is one more, so our claim is proved.

  We next claim that applying a permutation of rows and columns as described earlier, the entire upper triangle can be assumed to consist of elements of value $\leq 1$.
  Indeed, by the previous claim we may assume that $|\ba_{0i}| \leq 1$ for all $i$ and then proceed by induction on $n$ to treat the matrix $(\ba_{ij})_{1 \leq i,j \leq n}$.

  We are now at a situation where $|\ba_{ij}| \leq 1$ if $i < j$ (and $|\ba_{ij}| \geq 1$ if $i > j$).
  We observe that if $\ba,\bb,\bc \in \bP^1$ all have values $\leq 1$ then the product $\ba\bb = [ab:a^*b^*]$ is well defined and moreover $|ab| \leq 1 = |a^*b^*|$, i.e., the $|ab| \vee |a^*b^*| = 1$.
  It follows that
  \begin{gather*}
    d(\ba\bb,\ba\bc)
    = |aba^*c^* - a^*b^*ac|
    \leq |bc^* - b^*c|
    = d(\bb,\bc).
  \end{gather*}
  Similar observations hold if all values are $\geq 1$.
  We may therefore define
  \begin{gather*}
    \bc_{ij} = \prod_{i \leq k < j} \ba_{k,k+1},
    \qquad
    \bc_{ji} = \bc_{ij}^{-1} = \prod_{i \leq k < j} \ba_{k+1,k},
    \qquad
    (i \leq j).
  \end{gather*}
  It is not difficult to check that $\bar \bc \in E$, and in order to prove the first item all that is left to check is that $d(\bar \bc,\bar \ba) \leq r$.
  Keeping in mind that $d(\bc,\ba) = d(\bc^{-1},\ba^{-1})$, it will be enough to check that $d(\ba_{ij},\bc_{ij}) \leq r$ for all $i<j$.
  We do this by induction on $j-i$.
  In the base case $j-i = 1$ we have $\ba_{ij} = \bc_{ij}$.
  Assume now that $d(\ba_{ij},\bc_{ij}) \leq r$.
  Then
  \begin{align*}
    d(\bc_{i,j+1},\ba_{i,j+1})
    &
    \leq
    d(\bc_{i,j+1},\ba_{ij}\ba_{j,j+1})
    \vee
    d(\ba_{ij}\ba_{j,j+1},\ba_{i,j+1})
    \\ &
    =
    d(\bc_{ij}\ba_{j,j+1},\ba_{ij}\ba_{j,j+1})
    \vee
    \|\ba_{ij}\ba_{j,j+1} - \ba_{i,j+1}\|
    \\ &
    \leq
    d(\bc_{ij},\ba_{ij})
    \vee
    r
    = r.
  \end{align*}
  This concludes the proof of the first item, and we have also proved the second item as a special case of our first claim.

  For the third item, the fact that $[\ba_{\ell,0}:\ldots:\ba_{\ell,\ell-1}:1:\ba_{\ell,\ell+1}:\ldots:\ba_{\ell,n}]
  \in \bP^n$ is a solution is an immediate consequence of the hypothesis that $\bar \ba \in E$.
  Conversely, let $\bb \in \bP^n$ be any solution.
  Then $b_i = \ba_{\ell i}b_\ell$ for all $i$, and since $\bigvee |b_i| = 1$ we must have $|b_\ell| = 1$.
  We may therefore assume that $b_\ell = 1$ and we obtain $b_i = \ba_{\ell i}$ as desired.
  Finally, let $\bb \in \bP^n$, and define $\ba_{ij} = [b_j:b_i]$ when at least one of $b_i,b_j$ is non zero and $[1:1]$ otherwise.
  Then $\bar \ba \in E$ and $\bb$ is the associated solution.
\end{proof}

We recall from \cite[Section~1.2]{BenYaacov:DefinabilityOfGroups} that a map $f\colon X \rightarrow Y$ between type-definable subsets of a structure is called \emph{definable} if its graph is type-definable, or equivalently, if composing any definable predicate with $f$ yields a definable predicate (a type-definable set is one which is the intersection of a family of zero sets of formulae, or of definable predicates; as in classical logic, a type-definable set corresponds to a closed set of types, see \cite[Section~1.1]{BenYaacov:DefinabilityOfGroups}).
The former characterisation implies that if $f$ is bijective then its inverse is definable as well.
In the latter characterisation, it suffices to verify for the distance predicate alone.

\begin{thm}
  \label{thm:ProjSpaceInterpretation}
  The projective line $K\bP^1$ is uniformly quantifier-free biïnterpretable with $K\bP$, and in fact $K\bP^1$ is uniformly definable (rather than merely interpretable) in each of the sorts $K\bP^n$ of $K\bP$ for $n \geq 1$.
  More precisely:
  \begin{enumerate}
  \item
    The $\cL_{\bP^1}$-structure $K\bP^1$ and the sort $\bP^1$ of the $\cL_\bP$-structure $K\bP$ are quantifier-free definable in one another, meaning that a predicate $\varphi\colon (K\bP^1)^m \to [0,1]$ is quantifier-free definable in $K\bP^1$ if and only if it is quantifier-free definable in $K\bP$.
  \item
    For every $n \geq 1$ there exist a quantifier-free definable subset $D_n \subseteq \bP^n$ and a definable bijection $\theta_n\colon D_n \to \bP^1$ such that for every quantifier-free definable predicate $\varphi \colon (\bP^1)^m \to [0,1]$, the predicate $\varphi \circ (\theta_n)\colon (D_n)^m \to [0,1]$ is quantifier-free definable as well.
  \item
    For every $n$ there exist a quantifier-free definable subset $E_n \subseteq (\bP^1)^{M(n)}$ and a definable surjection $\rho_n\colon E_n \to \bP^n$ such that for every quantifier-free definable predicate $\varphi \colon
    \bP^{n_0} \times \cdots \times \bP^{n_{m-1}} \to [0,1]$, the predicate $\varphi \circ (\rho_{n_0},\ldots,\rho_{n_{m-1}})\colon
    E_{n_0} \times \cdots \times E_{n_{m-1}} \to [0,1]$ is quantifier-free definable as well.
  \item The predicates defining $D_n$ and $E_n$, as well as the translation schemes from quantifier-free predicates in one sort or structure to another are uniform, i.e., do not depend on $K$.
  \end{enumerate}
\end{thm}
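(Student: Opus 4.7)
The plan is to treat the four items in the order $(3), (2), (1), (4)$: item~(3) contains all the content, and the rest follow by specialisation and translation. For item~(3) I take $E_n = E \subseteq (\bP^1)^{M(n)}$, $M(n) = n(n+1)/2$, from Lemma~\ref{lem:PnCovering}, whose item~(1) already shows $E_n$ quantifier-free definable. The map $\rho_n$ sends $\bar a \in E_n$ to the unique projective solution $b$ of the linear system $a_0^{ij} Y_i = a_1^{ij} Y_j$ from item~(3) of that lemma, and surjectivity is its final sentence. The substantive task is the pullback property. Reducing to the atomic $\cL_\bP$-predicates $\|\cdot\|$, the action of each $A \in SL_{n+1}(\bZ)$, and the Segre embedding $\otimes$, I verify each in turn: the projective norm $\|b\| = |b_0|$ rewrites, using $b_i/b_j = a^{ji}$ projectively together with the normalisation $\bigvee_i |b_i| = 1$, as a continuous formula in the atomic predicates $\|a^{ij}\|$ and $\|(a^{ij})^*\|$; the $SL$-action amounts to an integer-linear change of coordinates that passes through the same construction; and the Segre map $([b],[c]) \mapsto [b_k c_l]$ has coordinates that are monomials in the $a^{ij}$ and the corresponding generators of $\rho_m$ for the second factor.

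For item~(2) I let $D_n$ be the image of the zero-padding $[a_0{:}a_1] \mapsto [a_0{:}a_1{:}0{:}\ldots{:}0]$. For each $2 \le i \le n$, a matrix $A_i \in SL_{n+1}(\bZ)$ moving coordinate $i$ into position $0$ gives $\|A_i(x)\| = |a_i|$ on normalised representatives, so $D_n$ is the zero set of $\psi(x) = \bigvee_{2 \le i \le n} \|A_i(x)\|$, with $d(x,D_n) \leq \psi(x)$ a direct computation in the spirit of Lemma~\ref{lem:PnCovering}. The projection $\theta_n\colon D_n \to \bP^1$ to the first two coordinates is definable, and the pullback property for $\theta_n$ is a restriction of the one already verified for $\rho_n$. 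Item~(1) then follows by comparing atomic predicates on the $\bP^1$ sort: in $\cL_\bP$, $\|\cdot\|$ is exactly the $\cL_{\bP^1}$-predicate $\|X\|$, each $A \in SL_2(\bZ)$ is visibly a polynomial change of variables, and $\otimes\colon \bP^1 \times \bP^1 \to \bP^3$ is accessed via item~(3) applied at $n = 3$; conversely, an atomic predicate $\|P(\bar x)\| = |P^h(\bar a,\bar a^*)|$ unpacks into a combination of monomials in the $a_i$ and $a_i^*$, each of which is a coordinate of an iterated Segre image and hence expressible in $\cL_\bP$. Item~(4), the uniformity in $K$, is then automatic: every construction above is phrased purely in terms of integer polynomials and intrinsic operations.

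The main technical hurdle will be the pullback of $\|\cdot\|$ in item~(3): the naive expression $\|\rho_n(\bar a)\| = 1 / \bigvee_i |a^{0i}|$ contains a division that must be turned into a continuous function of the raw atomic predicates $\|a^{ij}\|$ and $\|(a^{ij})^*\|$. The cure is to stay throughout with the homogenised coordinates provided by $\cL_{\bP^1}$ and to use the defining relations of $E_n$ to clear denominators, mirroring the bookkeeping carried out in the proof of Lemma~\ref{lem:PnCovering} and in the division lemma preceding the main theorem of the section.
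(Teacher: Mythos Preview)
Your proposal is correct and follows essentially the same route as the paper: the same sets $E_n$ from Lemma~\ref{lem:PnCovering} and the same zero-padded $D_n \subseteq \bP^n$, with the same surjection $\rho_n$ and projection $\theta_n$. The only differences are cosmetic: you treat the items in the order $(3),(2),(1),(4)$ rather than $(1),(2),(3),(4)$, and you attempt to spell out the pull-back verifications (including the division issue for $\|\rho_n(\bar a)\|$) that the paper explicitly leaves to the reader.
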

\begin{proof}
  The first item is easy, keeping in mind that it is enough to show
  that every atomic formula in one structure is quantifier-free
  definable in the other.

  For the second item, we let
  $D_n = \{ [a_0:a_1:0:\ldots:0] \colon [a_0:a_1] \in \bP^1\}$.
  It is not difficult to check that
  $d(\bb,D_n) = \bigvee_{2\leq i \leq n} |b_i|$
  which is definable by a quantifier-free formula.
  The map $\theta_n \colon [a_0:a_1:0:\ldots:0] \mapsto [a_0:a_1]$ is definable since its graph is given by
  \begin{gather*}
    \theta_n(x) = y
    \quad \Longleftrightarrow \quad
    \|x_0y_1 - x_1y_0\| = 0.
  \end{gather*}
  We leave it to the reader to check that the pull-back of every
  atomic formula in $\bP^1$ is quantifier-free definable
  in $\bP^n$.

  For the third item most of the work has already been done
  in \autoref{lem:PnCovering}.
  We take $M(n) = \frac{n(n+1)}{2}$
  and define $E_n$ as in the Lemma.
  Then we have already seen that $E_n$ is quantifier-free definable
  and constructed the surjection $\rho_n\colon E_n \to\bP^n$.
  Again we leave it to the reader to check that the pull-back of
  an atomic formula from $\prod \bP^{n_i}$
  to $\prod E_{n_i}$ is quantifier-free definable.

  Everything we did (or left to the reader) is independent of the
  field $K$, whence follows the uniformity.
\end{proof}

It follows that the class of structures $K\bP$ is elementary as well.
Moreover, if we prove that some theory extending $MVF$ eliminates quantifiers (as we shall, in \autoref{thm:QE} below) it will follow that the corresponding $\cL_\bP$-theory eliminates quantifiers as well.

\section{The theory of algebraically closed metric valued fields}

\begin{dfn}
  We define $ACMVF$, the theory of algebraically closed metric valued
  fields, to consist of $MVF$ along with the following
  additional axioms
  \begin{align*}
    & \exists y \, \|y\| = \half
    \\
    & \exists y \,
    \|P(\bar x,y)\| = 0
    && (\deg_Y(P) \geq 1)
  \end{align*}
\end{dfn}
As usual, the existential quantifier should be understood in the
approximate sense.
In the case of the first axiom, it may indeed happen that in a model
of $ACMVF$ the value $\half$ never occurs.
For the second axiom, the approximate witnesses must accumulate near
at least one of finitely many roots, so a root must exist in the
(complete) model.

\begin{lem}
  The models of $ACMVF$ are precisely the projective lines over
  complete, algebraically closed, non trivially valued fields.
\end{lem}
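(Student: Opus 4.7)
The plan is to invoke the earlier theorem, which identifies models of $MVF$ with projective lines $K\bP^1$ over complete valued fields $K$, and then to show that the two additional axioms of $ACMVF$ correspond exactly to $K$ being non-trivially valued and algebraically closed.

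For the easy direction, suppose $K$ is complete, algebraically closed, and non-trivially valued. The value group $|K^\times|$ is a non-trivial divisible subgroup of $(\bR^{>0},\cdot)$ (divisible since every element of $K$ has all $n$-th roots), and any non-trivial divisible subgroup of $(\bR^{>0},\cdot)$ is dense. Thus one may find $a \in K$ with $|a| \leq 1$ and $|a|$ arbitrarily close to $\half$, whereupon $y = [a:1]$ approximately witnesses the first axiom. For the root axiom, given $P(\bar X,Y)$ with $d := \deg_Y P \geq 1$ and parameters $[\bar a] \in (K\bP^1)^n$, the homogenisation $P^h(\bar a,\bar a^*,Y,Y^*)$ is a homogeneous polynomial of degree $d$ in $(Y,Y^*)$ with coefficients in $K$; if it is not identically zero it factors over the algebraically closed $K$ into $d$ linear factors, each contributing a root in $K\bP^1$, and otherwise any $y$ works.

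For the converse, suppose $\cM \models ACMVF$; by the earlier theorem $\cM = K\bP^1$ for some complete valued field $K$. If $K$ were trivially valued, then $\|y\| \in \{0,1\}$ for every $y \in K\bP^1$, contradicting the first axiom, which forces $\|y\|$ to approach $\half$. To establish algebraic closedness, fix $Q(Y) = \sum_{i\leq n}c_iY^i \in K[Y]$ of degree $n \geq 1$ with $c_n \neq 0$, and apply the root axiom to $P(\bar X,Y) = \sum_i X_iY^i \in \bZ[\bar X,Y]$ at parameters $\bar c \in K^n \subseteq M^n$; this yields a sequence $(y_m) \subseteq M$ with $\|P(\bar c,y_m)\| \to 0$. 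A direct computation gives $\|P(\bar c,\infty)\| = |c_n| > 0$, so by continuity the $y_m$ stay bounded away from $\infty$, and we may assume $y_m \in K$ with $|Q(y_m)| = \|P(\bar c,y_m)\| \to 0$.

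The main step, and the only non-routine one, is to extract an exact root of $Q$ from these approximate witnesses. For this I would invoke the classical fact that a non-Archimedean valuation extends uniquely to any algebraic extension: extend $|{\cdot}|$ to an abstract algebraic closure $\bar K$ of $K$ and factor $Q(Y) = c_n\prod_{i\leq n}(Y - \alpha_i)$ in $\bar K[Y]$, so that
\begin{gather*}
  |Q(y_m)| = |c_n|\prod_{i\leq n}|y_m - \alpha_i|.
\end{gather*}
For each $m$ there is some $i_m$ with $|y_m - \alpha_{i_m}| \leq (|Q(y_m)|/|c_n|)^{1/n} \to 0$; pigeonholing yields a fixed $i$ and a subsequence $(y_{m_k})$ with $y_{m_k} \to \alpha_i$ in $\bar K$. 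This subsequence is Cauchy in $K$ (by the ultrametric inequality), and since $K$ is complete its limit lies in $K$ and equals $\alpha_i$, which is therefore a root of $Q$ in $K$.
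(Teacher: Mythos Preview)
Your argument is correct and follows essentially the same route as the paper, which simply declares one direction ``clear'' (relying on the remark just before the lemma that approximate roots of a polynomial must accumulate at one of its finitely many roots in an algebraic closure, hence yield an exact root by completeness) and for the other direction observes that the value group of a non-trivially valued algebraically closed field is dense.  One minor imprecision: in general $\|P(\bar c,\infty)\| = |c_n|\prod_{j<n}|c_j^*|$ rather than $|c_n|$, and similarly $\|P(\bar c,y_m)\| = |Q(y_m)|\cdot\|P^*(\bar c,y_m)\|$; this does not affect your conclusions (both quantities are still bounded away from zero, respectively tend to zero), and can be avoided altogether by first normalising $Q$ so that $\max_i|c_i|=1$.
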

\begin{proof}
  One direction is clear.
  For the other, given an algebraically closed field equipped with a
  non trivial valuation, the set of values must be dense in $\bR$ and
  in particular contain $\half$ in its closure.
\end{proof}

\begin{fct}
  Let $K \subseteq L$ be an extension of valued fields,
  where $K$ is complete, and let $a \in L$ be algebraic over $K$ of
  degree $n$ and with irreducible polynomial $P(X) \in K[X]$.
  Then $|a|^n = |P(0)|$.
\end{fct}

\begin{thm}
  \label{thm:QE}
  The theory $ACMVF$ eliminates quantifiers.
  It is therefore the model completion of $MVF$.
\end{thm}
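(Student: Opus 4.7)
The plan is to prove quantifier elimination via the standard back-and-forth test and to deduce the model-completion statement from QE together with amalgamation. For QE I must show that for $\cM \models MVF$ and $\cN \models ACMVF$ with $\cN$ sufficiently saturated, every embedding of a substructure $\cA \subseteq \cM$ into $\cN$ extends to an embedding $\cM \to \cN$. Using the preceding theorem to identify models of $MVF$ (resp.\ $ACMVF$) with projective lines over complete (resp.\ complete, algebraically closed, non-trivially) valued fields, the problem translates into: given a valued field extension $K \supseteq K_0$ and a valued-field embedding $f \colon K_0 \hookrightarrow L$ with $L$ complete, algebraically closed, non-trivially valued, and $|K|^+$-saturated, extend $f$ to an isometric embedding $K \hookrightarrow L$.

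By a Zorn argument it suffices to extend $f$ to $K_0(a)$ for a single $a \in K$; replacing $K_0$ by its closure in $K$ and $f(K_0)$ by its closure in $L$, I may assume $K_0$ is complete. If $a$ is algebraic over $K_0$ with minimal polynomial $P \in K_0[X]$, then $f(P)$ has a root $a' \in L$ since $L$ is algebraically closed. By the uniqueness of the extension of a valuation from a complete field to a finite algebraic extension, combined with the cited formula $|a|^n = |P(0)|$ (which forces the value of every conjugate), the map $a \mapsto a'$ extends $f$ isometrically to $K_0(a) \to L$, choosing $a'$ among the finitely many roots of $f(P)$ so that the $f$-transported valuation matches.

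If $a$ is transcendental over $K_0$, its quantifier-free type over $K_0$ is encoded by the multiplicative ultra-metric extension $\gamma(Q) = |Q(a)|$ of $|\cdot|$ from $K_0$ to $K_0[X]$. By saturation of $L$ it is enough to realize $\gamma$ approximately on any finite subset: given $Q_1, \ldots, Q_m \in f(K_0)[X]$ and $\varepsilon > 0$, find $a' \in L$ with $\bigl| |Q_j(a')| - \gamma(f^{-1}Q_j) \bigr| < \varepsilon$ for each $j$. Since $L$ is algebraically closed, each $Q_j$ factors over $L$, so the values $|Q_j(a')|$ are controlled by the ultra-metric distances from $a'$ to the roots of the $Q_j$ in $L$; the axioms (Ult) and (Prod) ensure that any pattern of such distances compatible with $\gamma$ is consistent, and the non-triviality of the valuation of $L$, together with algebraic closedness, provides an $a'$ realizing it.

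The main obstacle is the transcendental case, and specifically the \emph{immediate} subcase in which $\gamma$ is a pseudo-limit of a pseudo-Cauchy sequence in $K_0^{\mathrm{alg}}$ without itself being realized by an element of $K_0^{\mathrm{alg}}$: here $|K|^+$-saturation of $L$ is exactly what permits realization, as the partial type need not be finitely satisfiable by any single algebraic element. Once QE is established, the model-completion statement follows: $MVF$ has the amalgamation property (valued-field amalgams exist and pass to completions), and every model of $MVF$ embeds in some model of $ACMVF$ (pass to an algebraic closure with its unique valuation extension, then to a non-trivially valued extension containing a point of norm $\tfrac12$); combined with QE, these standard valuation-theoretic facts yield that $ACMVF$ is the model companion, hence the model completion of $MVF$.
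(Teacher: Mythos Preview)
Your framework is correct and matches the paper's: back-and-forth between saturated models, reduce to extending by a single element, treat the algebraic and transcendental cases separately. The algebraic case is fine. The gap is in the transcendental case, and it is precisely the point where the paper does the real work.

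You write that, since $L$ is algebraically closed, each $Q_j$ factors over $L$, so $|Q_j(a')|$ is a product of distances from $a'$ to roots in $L$, and that ``the axioms (Ult) and (Prod) ensure that any pattern of such distances compatible with $\gamma$ is consistent.'' This is not an argument. The constraint on $a'$ is only on the \emph{products} $\prod_k |a'-\beta_k^{(j)}| = \gamma(Q_j)$; you have not specified what the individual distances $|a'-\beta_k^{(j)}|$ should be, nor shown that any realizable choice of them yields the desired products. Axioms (Ult) and (Prod) are constraints the model satisfies, not existence principles; invoking them here does not produce $a'$. Factoring over $L$ rather than over the base makes the problem worse, since the roots $\beta_k^{(j)}$ live in $L$ and there is no a priori relation between them and the element $c \in K$ you are trying to copy. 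The aside about the ``immediate subcase'' and pseudo-Cauchy sequences is a red herring: in the metric setting the finite approximation problem must be solved uniformly, and saturation is used in exactly the same way in every case.

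The paper's move that you are missing is to first enlarge the base field $K_0$ (the paper's $A$) to be \emph{algebraically closed} as well as complete. Then every $Q \in K_0[X]$ factors into linear terms over $K_0$ itself, so the quantifier-free type of $c$ is determined by the values $|c-a|$ for $a \in K_0$, and these are the explicit targets: one must find $d \in L$ with $|d-\theta a_i|$ close to $|c-a_i|$ for finitely many $a_i$. The paper then gives a concrete construction of such $d$, using the density of the value group (the axiom $\exists y\,\|y\|=\tfrac12$) and, in the delicate case where $|c|\leq r=\min_i|c-a_i|$, the infinitude of the residue field of $L$ together with a pigeonhole argument to avoid all the $\theta a_i$ simultaneously. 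This explicit finite realization is the content of the theorem; your proposal stops just before it.
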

\begin{proof}
  Let both $K\bP^1,F\bP^1 \vDash ACMVF$ be somewhat saturated, and let $\theta\colon A\to B$ be a valuation-preserving isomorphism of relatively small sub-fields $A \subseteq K$ and $B \subseteq F$.
  First of all we may assume that $A$ and $B$ are complete.
  Second, any extension of the isomorphism to an algebraic isomorphism of their algebraic closure will preserve the valuation, so we may further assume that $A$ and $B$ are algebraically closed (of course, the algebraic closure need not be complete, so we would have to pass to the completion again).

  Let now $c \in K$ be transcendental over $A$.
  The quantifier-free type of $c$ over $A$ is determined by the mapping assigning to each $P(X) \in A[X]$ the value $|P(c)|$.
  Since $A$ is algebraically closed, it suffices to know this for linear polynomials, i.e., to know $|c-a|$ for all $a \in A$.

  For our purposes it will be enough to show that for every finite tuple $a_0,\ldots,a_{n-1} \in A^n$ and every $\varepsilon > 0$ there exists $d \in F$ such that $\bigl| |c-a_i| - |d-\theta a_i| \bigr| < \varepsilon$ for $i < n$.
  Let $r = \min_{i<n} |c-a_i|$.
  Possibly decreasing $\varepsilon$ and re-arranging the tuple $\bar a$, we may assume that there is $k$ such that $|c - a_i| = r$ if $i < k$ and $|c - a_i| > r+\varepsilon$ if $k \leq i < n$.
  It will therefore be enough to find $d \in F$ such that $\bigl| r - |d-\theta a_i| \bigr| < \varepsilon$ for $i < k$ (since then $|d- \theta a_i| = |a_0-a_i| = |c-a_i|$ follows for $k \leq i < n$).
  We consider two cases:

  \textbf{Case I:}
  If $|c| > r$, we choose $d_0 \in F$ such that $r < |d_0| < \min(r+\varepsilon,|c|)$ (such $d_0$ exists since the set of values is dense in $\bR$), and let $d = d_0+\theta a_0$.
  Then $|d - \theta a_i| = |d_0|$ for all $i<k$.

  \textbf{Case II:}
  If $|c| \leq r$, then $|a_i| \leq r$ for all $i < k$.
  Since $B$ is algebraically closed, so is its residue field.
  In particular, the residue field is infinite, so we may choose $b_{\leq k} \in B$ such that $|b_i| = 1$ for all $i\leq k$ and $|b_i-b_j| = 1$ for all $i<j\leq k$.
  We may also choose $e \in F$ such that $r-\varepsilon < |e| < r$.
  We claim that there is $j \leq k$ such that for all $i < k$: $|b_je - \theta a_i| \geq |e|$.
  Indeed, otherwise, by the pigeonhole principle we can find $i < j \leq k$ such that $|b_ie - b_je| < |e|$, whereby $|b_i-b_j| < 1$, contrary to our assumption.
  Let $d$ be this $b_je$.
  Since $|a_i| \leq r$ and $|d| < r$, we must have $|e| \leq |d-\theta a_i| \leq r$ for all $i < k$.

  This concludes the proof that $K$ and $F$ correspond by an infinite back and forth.
  It follows that $ACMVF$ eliminates quantifiers.
  It is also clearly a companion of $MVF$ and therefore it is its model completion.
\end{proof}

\begin{rmk}
  Let $MVF_\bZ$ denote the theory $MVF$ along with axioms saying that the set of non zero values is contained in some fixed infinite discrete group, say $e^\bZ$.
  This can be expressed by the axiom $\|x^*\| \in e^{-\bN} \cup \{0\}$.
  In models of this theory both the valuation ring and its complement are type-definable, so they are in fact definable.
  The maximal ideal is definable as well, so we may refer to the residue field directly as an imaginary sort.
  Similarly, for every $n$, the set of field elements of value $e^{-n}$ is definable.

  Let $ACMVF_\bZ$ consist in addition of axioms saying that the value $e^{-1}$ is attained, that every element of value $e^{kn}$ has an $n$th root and that every irreducible monic polynomial over the valuation ring with free term $1$ has a root.
  Then $ACMVF_\bZ$ eliminates quantifiers, and it is the model completion of $MVF_\bZ$.
  The argument is similar to that given for \autoref{thm:QE}.
\end{rmk}

\begin{cor}
  The following is an exhaustive list of the completions of $ACMVF$:
  \begin{enumerate}
  \item
    Characteristic $(0,0)$: $|p| = 1$ for all prime $p$.
  \item
    Characteristic $(0,p)$: $|p| = \alpha$ for some prime $p$ and $0 < \alpha < 1$.
  \item
    Characteristic $(p,p)$: $p = 0$ for some prime $p$.
  \end{enumerate}
\end{cor}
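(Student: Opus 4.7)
The plan is to invoke quantifier elimination (\fref{thm:QE}): every completion of $ACMVF$ is determined by the complete quantifier-free type of the empty tuple, which in turn is determined by the restriction of the valuation $|\cdot|$ to the prime subring of $K$. So the task reduces to classifying the possible restrictions of a non-Archimedean multiplicative valuation to $\bZ$ or $\bF_p$, then checking that each survivor is actually realised by some model.

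First I would split by $\mathrm{char}(K)$. If $\mathrm{char}(K) = p > 0$, the prime subring is $\bF_p$; every element of $\bF_p^\times$ has finite multiplicative order, so multiplicativity of $|\cdot|$ forces $|\bF_p^\times| = \{1\}$, giving a unique quantifier-free type and the $(p,p)$ case. If $\mathrm{char}(K) = 0$, the prime subring is $\bZ$; by multiplicativity the valuation is determined by the values $|p|$ as $p$ ranges over rational primes. The set $\mathfrak{m}_\bZ = \{n \in \bZ : |n| < 1\}$ is closed under addition (ultra-metric inequality) and scaling, and is prime (multiplicativity), hence equals either $(0)$ or $(p)$ for a unique prime $p$. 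In the first case we get the $(0,0)$ case with $|p|=1$ for all primes; in the second we get the $(0,p)$ case with $|p| = \alpha \in (0,1)$ and $|q|=1$ for $q \neq p$, and the type is pinned down by the single real parameter $\alpha$.

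It remains to exhibit a model of $ACMVF$ realising each of the listed types. For $(p,p)$, take the completion of an algebraic closure of the $t$-adically valued $\bF_p((t))$. For $(0,p)$ with parameter $\alpha$, take $\bC_p$ equipped with $|\cdot|_p^s$ for the $s > 0$ satisfying $p^{-s} = \alpha$, which remains a complete algebraically closed non-trivial valued field. For $(0,0)$, any non-trivial algebraically closed complete valued field whose restriction to $\bZ$ is trivial works, e.g.\ the completion (inside a spherical completion) of an algebraic closure of the $t$-adically valued $\overline{\bQ}((t))$.

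The bulk of the argument is bookkeeping; the only conceptual content is the reduction to the prime subring via QE together with the observation that on $\bZ$ the set $\{|n|<1\}$ is a prime ideal. The mildly delicate point, and the main thing to be careful about, is to confirm that every value $\alpha \in (0,1)$ in the $(0,p)$ case genuinely occurs --- which is precisely what the rescaling $|\cdot|_p \mapsto |\cdot|_p^s$ provides.
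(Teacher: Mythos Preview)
Your proposal is correct and takes essentially the same approach as the paper: both invoke quantifier elimination to reduce completeness to the values $|n|$ for $n \in \bZ$, then classify these restrictions. You supply a self-contained argument (the prime-ideal observation and explicit models) where the paper simply cites Artin for existence and writes down the formula for $|n|$ in each case, but the underlying strategy is identical.
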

\begin{proof}
  It is known (e.g., from \cite{Artin:AlgebraicNumbersAndFunctions}) that every model of $ACMVF$ falls into one of these categories and that none of them is empty.
  Since each of the listed theories determines $|n|$ for each $n \in \bZ$, by quantifier elimination they are complete.
\end{proof}

The space of completions consists therefore of a family of segments $[0,1]$, one for each prime $p$, with all the $1$ points identified (the $(0,0)$ case).
This is essentially the zero dimensional Berkovich space over $\bZ$, just without the segment corresponding to Archimedean valuations, which we chose to exclude.
Similarly,

\begin{cor}
  Let $K$ be a model of model of $ACMVF$, let $A \subseteq K$, and let $K_0$ be the complete sub-field generated by $A$.
  Then the space of $1$-types over $A$ in the sort $\bP^n$ is precisely the $n$-dimensional projective analytic Berkovich space over $K_0$.
\end{cor}

Let us give a slightly different characterisation of types (or more precisely, of $1$-types) which will be useful for counting them.
\begin{dfn}
  Let $K$ be a valued field and let $\cC$ and $\cC'$ be two chains of
  closed balls in $K\bP^1$.
  Say that $\cC$ and $\cC'$ are \emph{mutually co-final} if each ball
  in one chain contains some ball belonging to the other.
  This is an equivalence relation, and by a \emph{sphere} over
  $K\bP^1$ we mean an equivalence class of such a chain.
  The set of all spheres will be denoted $\Sph(K\bP^1)$.

  Let $S,S' \in \Sph(K\bP^1)$ be spheres, say represented by
  $\cC$ and $\cC'$.
  We define the radius of $S$ as
  $\rad(S) = \inf_{B \in \cC} \rad(B)$.
  We define the Hausdorff distance between $S$ and $S'$
  as the limit of Hausdorff distances between balls in
  $\cC$ and $\cC'$:
  \begin{gather*}
    d_H(S,S')
    =
    \mathop{\lim_{B \in \cC, \rad(B) \to \rad(S)}}
    _{B' \in \cC', \rad(B') \to \rad(S')}
    d_H(B,B').
  \end{gather*}
\end{dfn}
It is not difficult to see that for closed balls $B$ and $B'$,
\begin{itemize}
\item $d_H(B,B') = 0$ if and only if $B = B'$,
\item if $B \supsetneq B'$ then $d_H(B,B') = \rad(B)$, and
\item if $B \cap B' = \emptyset$ then
  $d_H(B,B') = d(B,B')$.
\end{itemize}

Notice that every sphere admits a countable representative.
The field $K$ is complete if and only if every sphere of radius
zero contains a point.
If every sphere contains a point then $K$ is called
\emph{spherically complete}.

\begin{thm}
  Let $K\bP \vDash ACMVF$.
  Then:
  \begin{enumerate}
  \item Let $S \in \Sph(K\bP^1)$ be a sphere, say the class of $\cC = \{\overline B(a_n,r_n)\}_{n \in \bN}$, and let $r = \inf r_n$ denote its radius.
    Then the set of conditions
    \begin{gather}
      \label{eq:pS}
      \{ \|x-a_n\| \leq r_n\}_{n\in\bN}
      \cup \{ \|x-a\| \geq r\}_{a \in K\bP^1}
    \end{gather}
    axiomatises a complete type $p_S(x) \in \tS_1(K)$ which depends only on $S$.
  \item The mapping $S \mapsto p_S$ is an isometric bijection $\bigl( \Sph(K\bP^1), d_H \bigr) \cong \bigl( \tS_1(K),d \bigr)$, where the distance between two types is the minimal distance between realisations.
  \end{enumerate}
\end{thm}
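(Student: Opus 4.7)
The plan is to use quantifier elimination (\fref{thm:QE}) to reduce the analysis of $1$-types over $K$ to the data of the values $\|x-a\|$ for $a\in K\bP^1$, and then to set up a direct combinatorial correspondence between spheres and such ``value functions.''

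For the first item, I would first verify that $\Sigma_S(x)$ given in \eqref{eq:pS} is finitely satisfiable in a sufficiently saturated elementary extension of $K\bP^1$. A finite subset reduces to $\|x-a_N\|\leq r_N$ (the smallest prescribed ball) together with $\|x-b_j\|\geq r$ for finitely many $b_j$; since $K$ is algebraically closed, its residue field is infinite and the value group is dense in $\bR^{>0}$, which allows one to produce an element of $\overline B(a_N,r_N)$ avoiding the open balls $B(b_j,r)$. By \fref{thm:QE} it then suffices to check that $\Sigma_S$ determines each value $\|c-a\|$ for $c\models\Sigma_S$ and $a\in K\bP^1$. If $a\notin\overline B(a_n,r_n)$ for some $n$, then $\|a-a_n\|>r_n\geq\|c-a_n\|$ and the ultrametric identity gives $\|c-a\|=\|a-a_n\|$. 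If on the contrary $a$ lies in every ball of the chain, then $\|c-a\|\leq r_n$ for all $n$ (ultrametric with $a_n$), hence $\|c-a\|\leq r$; combined with $\|c-a\|\geq r$ this yields $\|c-a\|=r$. In either case the value is determined, and two cofinal chains produce logically equivalent axiomatizations, so $p_S$ depends only on $S$.

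For surjectivity in the second item, given any $p\in\tS_1(K)$ fix a realization $c$ in a saturated extension and consider the family $\{\overline B(a,\|c-a\|):a\in K\bP^1\}$. This is a chain: if $\|c-a\|\leq\|c-b\|$ then the ultrametric forces $\|a-b\|\leq\|c-b\|$, and for $x\in\overline B(a,\|c-a\|)$ one gets $\|x-b\|\leq\|c-b\|$. The associated sphere $S$ has $\rad(S)=\inf_{a\in K\bP^1}\|c-a\|$, and by construction $p$ contains $\Sigma_S$, hence $p=p_S$ by completeness.

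The heart of the proof is the isometry (which also yields injectivity). Given distinct spheres $S\neq S'$ with representing chains $\cC,\cC'$, two cases arise: either $\cC$ and $\cC'$ are eventually disjoint, or one is eventually strictly inside the other with distinct infimal radii (same-radius nesting would force the chains to be cofinal). Fix $c\models p_S$ and $c'\models p_{S'}$ in a common model. In the nested case, say $\cC'$ inside $\cC$ with $r'<r$, the inclusion $\overline B(a'_m,r'_m)\subseteq\overline B(a_n,r_n)$ gives $\|c-c'\|\leq r_n$, hence $\|c-c'\|\leq r$; conversely $p_S$ forces $\|c-a'_m\|\geq r$, and together with $\|c'-a'_m\|\leq r'_m<r$ for large $m$ the ultrametric yields $\|c-c'\|\geq r$. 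In the disjoint case $\|a_n-a'_n\|>\max(r_n,r'_n)$, so the ultrametric gives $\|c-c'\|=\|a_n-a'_n\|$, which equals $d_H(\overline B(a_n,r_n),\overline B(a'_n,r'_n))$ and is constant for $n$ large. In both cases $\|c-c'\|=d_H(S,S')$ for every pair of realizations, so $d(p_S,p_{S'})=d_H(S,S')$, which gives both the isometry and injectivity. The main obstacle I expect is the finite satisfiability step when $r$ is attained and $S$ reduces to a single ball: producing in a saturated extension an element of that ball at distance exactly $r$ from finitely many prescribed points genuinely requires the infinite residue field provided by algebraic closedness.
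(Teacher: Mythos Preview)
Your argument tracks the paper's closely: reduce via quantifier elimination to the values $\|c-a\|$ for $a\in K\bP^1$, build the inverse map $p\mapsto S_p$ from a realisation, and verify the isometry by a case split on whether the two chains eventually separate. The one substantive difference is the consistency step. You argue approximate finite satisfiability inside $K$ itself, using the dense value group and infinite residue field to place a point in the smallest prescribed ball while dodging finitely many forbidden open balls---essentially the pigeonhole from Case~II of the proof of \fref{thm:QE}. The paper instead constructs the generic realisation explicitly, extending the valuation to $K(\alpha)$ by the Gauss rule $|P(\alpha)| = \bigvee_k r^k|b_k|$ and checking that $a_n+\alpha$ witnesses the required conditions. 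Both routes are valid; yours is more elementary and reuses machinery already in the paper, while the paper's is more constructive and exhibits the realisation directly.

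One caution on your dichotomy: the parenthetical ``same-radius nesting would force the chains to be cofinal'' is not literally true under the paper's definition. Take $\cC=\{\overline B(0,r_n)\}_n$ with $r_n\searrow r$ strictly, and $\cC'$ the constant chain $\{\overline B(0,r)\}$. These are nested with equal infimal radius $r$, yet not mutually cofinal (no $\overline B(0,r_n)$ fits inside $\overline B(0,r)$), so $S\neq S'$; nevertheless $p_S=p_{S'}$, since $\|x-0\|\leq r_n$ for all $n$ already forces $\|x-0\|\leq r$. The paper's proof makes the identical leap (``since else the spheres coincide''), so this is a wrinkle in the formulation of $\Sph(K\bP^1)$ rather than a defect peculiar to your write-up; it is repaired by adjoining to each chain the ball $\overline B(a,\rad S)$ whenever the intersection $\bigcap_n B_n$ contains a point $a\in K\bP^1$.
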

\begin{proof}
  Let us first show that \autoref{eq:pS} is consistent for every $S$.
  Possibly passing to a sub-sequence, and possibly applying the isometry $a \mapsto a^{-1}$ to $\bP^1$, we may assume that $|a_n| \leq 1$ for all $n$.
  Let $L = K(\alpha)$ where $\alpha$ is transcendental over $K$.
  Then we may extend the valuation to $L$ so that for every polynomial $P(X) = \sum_{k \leq m} b_kX^k \in K[X]$ we have $|P(\alpha)| = \bigvee_k \rad(S)^k |b_k|$.
  In particular, $|\alpha| = \rad(S) \leq 1$.
  Further extending to a model of $ACMVF$ we may assume that $L\bP^1 \succeq K\bP^1$.
  Let $c_n = a_n+\alpha$.
  For $a \in K\bP^1$ we have $\|c_n - a\| = 1 \geq \rad(S)$ if $|a| > 1$ and $\|c_n - a\| = |c_n - a| = |a_n-a| \vee \rad(S)$ otherwise.
  For $m < n$ we also have $\|c_n-a_m\| = |\alpha + (a_n-a_m)\| \leq r_m$.
  Thus \autoref{eq:pS} is finitely consistent and therefore consistent.

  By quantifier elimination and the fact that $K$ is algebraically closed, the type of an element $\alpha$ over $K\bP^1$ is determined by $|\alpha - a|$ as $a$ varies over $K$, or equivalently, by $\|\alpha - a\|$ as $a$ varies over $K\bP^1$.
  Let $S$ be the sphere consisting of all balls $\overline B\bigl( a, d(a,\alpha) \bigr)$, $a \in K\bP^1$.
  Then $S$ only depends on $\tp(\alpha/K)$, and conversely, $p_S = \tp(\alpha/K)$.
  This yields the bijection $\Sph(K\bP^1) \to \tS_1(K)$.

  It is left to show that this bijection is isometric.
  So let $S$ and $S'$ be two distinct spheres and let $\alpha$ and $\beta$ realise $p_S$ and $p_{S'}$, respectively.
  Assume first that $B \cap B' \neq \emptyset$ for all $B \in S$ and $B' \in S'$.
  Then $\rad(S) \neq \rad(S')$ (since else the spheres coincide), say $\rad(S) > \rad(S)$.
  Then $d_H(S,S') = \rad(S) = d(\alpha,\beta)$.
  On the other hand, if there are $B \in S$ and $B' \in S'$ which are disjoint then $d_H(S,S') = d(B,B') = d(\alpha,\beta)$ again.
\end{proof}

\begin{cor}
  \label{cor:StrictlyStable}
  The theory $ACMVF$ is strictly stable (i.e., stable non super-stable).
\end{cor}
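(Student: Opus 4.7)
The plan is to derive both halves of the statement from the identification $S_1(M) \cong (\Sph(M^{(1)}), d_H)$ provided by the preceding theorem, combined with routine cardinal arithmetic.

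For stability: if $M \models ACMVF$ has density character $\kappa$, then every sphere admits a countable representative chain of closed balls, and after approximating each center by an element of a fixed dense subset of $M$ of size $\kappa$ and each radius by a rational number we obtain a dense subfamily of $\Sph(M^{(1)})$ of size at most $\kappa^{\aleph_0}$. Hence the density character of $S_1(M)$ is at most $\kappa^{\aleph_0}$, and choosing $\kappa$ with $\kappa^{\aleph_0} = \kappa$ (for instance $\kappa = 2^{\aleph_0}$) yields $\kappa$-stability for $1$-types. Stability of $n$-types in the sorts $\bP^n$ then follows from the biïnterpretation of \fref{thm:ProjSpaceInterpretation}, which reduces $n$-types in $\bP^n$ to tuples of $1$-types in $\bP^1$, together with the standard fact that $1$-type stability propagates to $n$-type stability.

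For failure of super-stability: I would refute $\lambda$-stability for some $\lambda \geq 2^{\aleph_0}$ of countable cofinality, say $\lambda = \beth_\omega$. Fix a cofinal sequence $\lambda_n \nearrow \lambda$ with each $\lambda_n < \lambda$ and a strictly decreasing sequence $r_0 > r_1 > \cdots$ in the value group converging to some $r > 0$. Choose $M \models ACMVF$ of density character $\lambda$, sufficiently saturated to contain a rooted tree of balls in which every level-$(n{-}1)$ ball admits $\lambda_n$ pairwise disjoint sub-balls of radius $r_n$ at level $n$. This is possible because the residue field of any model of $ACMVF$ is algebraically closed and hence infinite, so every ball contains arbitrarily many pairwise disjoint proper sub-balls of any prescribed smaller radius in the value group; a standard saturation/Löwenheim--Skolem argument then packages these into a single $M$ of the required density character.

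Each branch $f$ through the tree is a descending chain of balls and thus defines, via the preceding theorem, a $1$-type $p_f \in S_1(M)$. For two distinct branches $f \neq g$ first diverging at level $k$, the corresponding spheres $S_f$ and $S_g$ stay in disjoint level-$k$ sub-balls from level $k$ onward; the ultrametric property implies that $d_H(S_f, S_g)$ is constant along the chains past level $k$ and equals the distance between those disjoint balls, which strictly exceeds $r_k$ and a fortiori $r$. By König's theorem there are $\prod_n \lambda_n > \lambda$ such branches, so $S_1(M)$ contains more than $\lambda$ types pairwise separated by $r$, giving $d(S_1(M)) > \lambda$ and refuting $\lambda$-stability. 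I expect the principal obstacle to be the tree construction, but once the infinite branching of balls at every level is in hand it reduces to standard saturation and cardinal arithmetic.
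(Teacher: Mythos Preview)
Your approach is essentially the paper's: stability from the $|K|^{\aleph_0}$ bound on spheres, non-superstability from a tree of nested balls yielding many spheres of fixed positive radius at pairwise distance $\geq r$. Your non-superstability half is simply a more explicit rendering of the paper's argument (which just asserts that each ball of radius $r'$ contains $|K|$ sub-balls of radius $r<r'$, hence $|K|^{\aleph_0}$ spheres of radius $r$), with the K\"onig/L\"owenheim--Skolem bookkeeping written out.

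There is, however, a genuine slip in your stability half. The claim that approximating centres and radii produces a \emph{dense} subfamily of $\Sph(K\bP^1)$ is false. If $\rad(S)=r>0$ and $S'\neq S$ then $d_H(S,S')\geq r$: from the proof of the preceding theorem, either the two chains remain intersecting and $d_H(S,S')=\max(\rad S,\rad S')\geq r$, or they are eventually disjoint and $d_H(S,S')$ is the distance between two disjoint balls, which exceeds both radii. Equivalently, if $\alpha\models p_S$ and $d(\alpha,\beta)<r$, then $|\beta-a|=|\alpha-a|$ for every $a\in K$ (since $|\alpha-a|\geq r$), so $\tp(\beta/K)=p_S$. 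Thus every positive-radius sphere is isolated at its own radius, and any dense subset of $\Sph$ must already contain all of them --- approximation buys you nothing here.

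The fix is exactly what the paper does: a straight cardinality count. A sphere is determined by a countable chain of balls, and each ball by a centre in $K\bP^1$ and a radius, so $|\Sph(K\bP^1)|\leq |K|^{\aleph_0}$; hence $ACMVF$ is $\lambda$-stable for any $\lambda=\lambda^{\aleph_0}$. Your observation that centres may be taken in a dense subset is correct (every point of an ultrametric ball is a centre) and can be folded into this count, but the conclusion should be a bound on $|\Sph|$, not a density statement.
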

\begin{proof}
  Let $K$ be a model.
  Since every sphere has a countable representative, a quick calculation yields that there are at most $|K|^{\aleph_0}$ spheres, and therefore types, over $K$.
  Thus the theory is stable.

  On the other hand, for every $0 < r < r' < 1$, every ball of radius $r'$ contains $|K|$ many distinct balls of radius $r$.
  Thus a refinement of our earlier calculation yields that there exist precisely $|K|^{\aleph_0}$ distinct spheres of radius $r$.
  The distance between any two such spheres is at least $r$, so the theory is not super-stable.
\end{proof}

\begin{rmk}
  \label{rmk:StrictlyStablePerturbation}
  Here we assume the reader has some familiarity with the notion of perturbations of metric structures and its uses, as introduced in \cite{BenYaacov:Perturbations}, or, in a somewhat simpler fashion, in \cite{BenYaacov:TopometricSpacesAndPerturbations}.
  Extensions of perturbations to types over parameters, and $\lambda$-stability up to perturbation, are also discussed in \cite{BenYaacov:TopometricSpacesAndPerturbations}.
  For example, it is shown in \cite{BenYaacov-Berenstein:HilbertProbabilityAutmorphismPerturbation} that the theory of atomless probability algebras with a generic automorphism, even though it is strictly stable, is $\aleph_0$-stable up to arbitrarily small perturbations of the automorphism.

  Omitting many details, let us consider a theory $T$ and a set of parameters $A \subseteq \cM \vDash T$.
  We define $\cL(A)$ to consist of the base language $\cL$ together with, for each $a \in A$, a unary predicate $P_a(x)$ for the distance $d(a,x)$.
  Thus ``a model of $T$ containing $A$'' is essentially the same as a model of $T(A) = \Th_{\cL(A)}(\cM)$, and types over $A$ are just types of $T(A)$ over $\emptyset$.
  Roughly speaking, a perturbation of a model of $T(A)$ consists of modifying the interpretation of the symbols of $\cL$ (usually with some small uniform bound on the extent of the modification, prescribed by a \emph{perturbation system}), in such a manner that the end result is again a model of $T(A)$, and that the predicates $P_a$, representing the parameters, remain unchanged.

  In $ACVMF$, when $A = K$ is a model, a $1$-type $\tp(b/K)$ is entirely determined by the map $a \mapsto P_a(b)$, so a perturbation cannot change $1$-types over $K$ at all (even if it does change, to some small extent, the distance and/or algebraic structure of an extension of $K$ containing the realisation).

  It follows that even up to perturbation, in the sense of the articles cited above, $ACMVF$ is strictly stable, i.e., $\lambda$-stable up to perturbation only when $\lambda = \lambda^{\aleph_0}$.
\end{rmk}

The same argument does not work for $ACMVF_\bZ$, since there a strictly decreasing sequence of radii must necessarily go to zero, and it follows that the theory is $\aleph_0$-stable.
This is hardly surprising, since equal characteristic models of $ACMVF_\bZ$ are just something of the form $K = k((X))$.
They are therefore interpretable in the valuation ring $k[[X]]$ which is in turn interpretable (as a metric structure) in $k$, a plain strongly minimal algebraically closed field.

It is an easy fact that if the union of two disjoint type-definable
sets is definable then each of the two sets is definable as well.
The following is a useful extension of this fact.

\begin{lem}
  \label{lem:DefinableUnionIntersection}
  Let $X$ and $Y$ be two type-definable sets such that both
  $X \cup Y$ and $X \cap Y$ are definable.
  Then $X$ and $Y$ are definable as well.
\end{lem}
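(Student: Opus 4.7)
The plan is to show that there is a definable predicate $\Psi_X$ with zero set exactly $X$ enjoying the uniformity property that for every $\varepsilon > 0$ there is $\delta > 0$ with $\Psi_X(x) \leq \delta \Rightarrow d(x,X) \leq \varepsilon$. This is the standard criterion in continuous logic ensuring that a type-definable set is in fact definable (equivalently, that $d(\cdot,X)$ is a definable predicate); a symmetric argument will then handle $Y$.

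Let $\Phi_X, \Phi_Y$ be definable predicates with zero sets $X, Y$ respectively (which exist because $X, Y$ are type-definable), and set $u(x) = d(x, X\cup Y)$ and $v(x) = d(x, X\cap Y)$, both definable by hypothesis. I propose
\[
\Psi_X(x) := \max\bigl(\Phi_X(x),\, u(x)\bigr),
\]
a definable predicate whose zero set is $X \cap (X\cup Y) = X$.

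To verify the uniformity, fix $\varepsilon > 0$ and use that $X \cap Y$ is definable. Since $\max(\Phi_X, \Phi_Y)$ is a definable predicate with zero set $X \cap Y$, the criterion applied to the (already definable) set $X \cap Y$ yields $\delta_1 > 0$ such that $\max(\Phi_X(z), \Phi_Y(z)) \leq \delta_1$ implies $d(z, X\cap Y) \leq \varepsilon/2$. Let $\omega$ be a modulus of uniform continuity of $\Phi_X$ and pick $\delta \leq \varepsilon/2$ with $\delta + \omega(\delta) \leq \delta_1$. Now assume $\Psi_X(x) \leq \delta$. Working in a sufficiently saturated elementary extension, the condition $u(x) \leq \delta$ furnishes $z \in X\cup Y$ with $d(x,z) \leq \delta$; uniform continuity yields $\Phi_X(z) \leq \Phi_X(x) + \omega(\delta) \leq \delta + \omega(\delta) \leq \delta_1$. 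If $z \in X$, then $d(x, X) \leq d(x,z) \leq \delta \leq \varepsilon$. Otherwise $z \in Y$, so $\Phi_Y(z) = 0$, hence $\max(\Phi_X(z), \Phi_Y(z)) \leq \delta_1$; this yields some $w \in X\cap Y \subseteq X$ within $\varepsilon/2$ of $z$, and therefore $d(x, X) \leq d(x,z) + d(z,w) \leq \delta + \varepsilon/2 \leq \varepsilon$.

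The crux of the argument is that the definability of $X \cap Y$ is exactly what converts a ``type-small'' condition (``$\Phi_X(z)$ small with $z \in Y$'') into a ``metric-small'' condition (``$z$ is close to $X \cap Y$'') with a uniform modulus; the rest is routine triangle inequality and uniform continuity. The use of saturation is benign, as the implication between definable predicates is first-order uniform and therefore descends to arbitrary models.
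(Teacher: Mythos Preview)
Your argument is correct and shares the paper's core idea: use compactness together with the definability of $X\cap Y$ to show that being type-close to $X$ while lying in $Y$ forces metric proximity to $X\cap Y$, then case-split on a nearby witness furnished by the definability of $X\cup Y$. The execution differs only in packaging. You build an explicit predicate $\Psi_X=\max(\Phi_X,u)$ and verify the modulus by hand; the paper instead exhibits directly an open logical neighbourhood of $X$, namely
\[
\bigl(d(\cdot,Y)>\delta \ \text{and}\ d(\cdot,X\cup Y)<\delta\bigr)\ \text{or}\ d(\cdot,X\cap Y)<\varepsilon,
\]
using the inf-definable quantity $d(\cdot,Y)$ rather than a chosen $\Phi_X$. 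One minor advantage of the paper's formulation is that it avoids selecting single definable predicates $\Phi_X,\Phi_Y$ with zero sets $X,Y$; such predicates exist whenever the defining partial types are countable (automatic here, since $\cL_{\bP^1}$ is countable), but not for arbitrary type-definable sets in an uncountable language, so the paper's version of the lemma is slightly more portable.
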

\begin{proof}
  It will be enough to show that $X$ is definable,
  and for this, it will be enough to show
  that for every $\varepsilon > 0$, the $\varepsilon$-neighbourhood
  $B(X,\varepsilon)$ contains a logical neighbourhood of $X$.

  Since $Y$ is type-definable and
  $X \cap Y$ definable,
  the properties $d(x,Y) \leq \delta$ and
  $d(x,X\cap Y) \geq \varepsilon$ are type-definable.
  By compactness there exists $\delta > 0$ such that
  $\bigl( x \in X \textbf{ and }
  d(x,X\cap Y) \geq \varepsilon
  \textbf{ and }
  d(x,Y) \leq \delta \bigr)$
  is contradictory.
  We may further assume that $\delta \leq \varepsilon$.
  We claim that the desired neighbourhood of $X$ is the given by the
  property
  \begin{gather*}
    \Bigl(
    d(x,Y) > \delta
    \textbf{ and } d(x,X \cup Y) < \delta
    \Bigr)
    \textbf{ or }
    d(x,X \cap Y) < \varepsilon.
  \end{gather*}
  Indeed, this is an open property,
  and it holds for every $x \in X$ by choice of $\delta$.
  Assume this property holds for $x$.
  If $d(x,X \cap Y) < \varepsilon$
  then $d(x,X) < \varepsilon$ as well.
  Otherwise, $d(x,X \cup Y) < \delta$ and
  $d(x,Y) > \delta$ imply that $d(x,X) < \delta \leq \varepsilon$,
  and the proof is complete.
\end{proof}

The following generalises the fact that a definable image of a
definable set is definable.

\begin{lem}
  \label{lem:DefinablePartialImage}
  Let $X$ be a definable set, $Y$ and $Z \subseteq X$ type-definable
  sets, and let $f\colon X \setminus Z \to Y$ be a bijection.
  Assume furthermore that $f$ is definable, in the sense that
  there exists a type-definable set $R \subseteq X \times Y$
  such that $R \cap \bigl( (X \setminus Z) \times Y \bigr)$
  is the graph of $f$.
  Then $Y$ is definable as well.
\end{lem}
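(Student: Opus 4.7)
Proof plan: the strategy is to exhibit $d(\cdot,Y)$ as a definable predicate by writing $Y$ as a projection of $R$ and approximating by definable supersets.

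Let $V$ denote the ambient sort containing $Y$. First I would observe that $\pi_V(R) = Y$: the inclusion $\pi_V(R) \subseteq Y$ is immediate from $R \subseteq X \times Y$, and the reverse uses surjectivity of $f$, since $(f^{-1}(y),y) \in R$ for every $y \in Y$. Writing $R = \bigcap_n R_n$ as a decreasing intersection of definable supersets, each projection $U_n := \pi_V(R_n)$ is a definable subset of $V$, with distance predicate
\[
d(y,U_n) \;=\; \inf_{x \in X} d\bigl((x,y),R_n\bigr),
\]
which is an infimum over the definable set $X$ of a definable predicate, so definable. Each $U_n$ contains $Y$, and in a sufficiently saturated monster one verifies $Y = \bigcap_n U_n$ by realising the type asserting $(x,y) \in R_n$ for all $n$.

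The essential step is to establish uniform Hausdorff convergence $U_n \to Y$: for every $\varepsilon > 0$ there is $n$ such that $U_n \subseteq B(Y,\varepsilon)$. Supposing otherwise, one obtains for some $\varepsilon > 0$ and each $n$ witnesses $(x_n,y_n) \in R_n$ with $d(y_n,Y) \geq \varepsilon$. The aim is to extract, via saturation in the monster $\mathfrak{C}$, a limit pair $(x^*,y^*) \in R$; since $y^* \in \pi_V(R) = Y$, a simultaneous metric lower bound $d(y^*,Y) \geq \varepsilon$ would contradict $d(y^*,y^*) = 0$. Here the bijectivity of $f$ becomes essential: one identifies $y^* = f(x^*)$ with $x^* = f^{-1}(y^*) \in X \setminus Z$, and transfers the inequality $d(y_n,Y) \geq \varepsilon$ back to a condition on $(x_n)$ via the uniformly continuous relation $R$. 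The definability of $X$ then supplies the control needed on the $x$-side to close the argument and force the contradiction.

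Once the uniform convergence is obtained, $d(\cdot,Y)$ is the uniform limit of the definable predicates $d(\cdot,U_n)$, hence a definable predicate, so $Y$ is definable.

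The main obstacle is precisely the saturation step in the convergence argument. Without the injectivity of $f$ the transfer through $f^{-1}$ is unavailable, and the naïve attempt to realise ``$(x,y) \in R$, $x \in X$, $d(y,Y) \geq \varepsilon$'' in $\mathfrak{C}$ fails: ``$d(y,Y) \geq \varepsilon$'' is a type over the whole of $Y^{\mathfrak{C}}$, which runs into cardinality issues because one cannot generally saturate over parameter sets of size $|\mathfrak{C}|$. The bijectivity of $f$ reroutes the problem through the definable set $X$, where the metric is under full control, and this is what makes the lemma a genuine strengthening of the basic fact that a definable image of a definable set is definable.
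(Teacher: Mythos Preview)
Your plan correctly identifies the crux and then stops short of resolving it. The compactness step you need---realising a pair $(x^*,y^*)\in R$ while maintaining $d(y^*,Y)\geq\varepsilon$---cannot be carried out as stated, because $\{y:d(y,Y)\geq\varepsilon\}$ is not known to be type-definable; establishing that it \emph{is} type-definable is exactly the content of the lemma. Your proposed fix, routing through $f^{-1}$, does not produce a usable type-definable condition on the $x$-side: the property ``$d(f(x),Y)\geq\varepsilon$'' is just as intractable after pulling back, and in any case a limit pair $(x^*,y^*)\in R$ need not have $x^*\notin Z$, so the identification $y^*=f(x^*)$ is not even justified. The Hausdorff-convergence framework you set up is sound, but completing it would require proving precisely the statement the paper proves directly.

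The paper bypasses the compactness manoeuvre and shows outright that $\{y:d(y,Y)\geq r\}$ is type-definable. Let $\pi(x)$ be the partial type defining $Z$. The key trick: for each $\varphi\in\pi$, the expression $d\bigl(y,f(x)\bigr)\wedge\varphi(x)$ makes sense for \emph{every} $x\in X$, since when $x\in Z$ one has $\varphi(x)=0$ and the minimum vanishes regardless of the undefined $f(x)$. This regularised expression is a definable predicate in $(x,y)$ (both $\{\geq s\}$ and $\{\leq s\}$ are witnessed through the type-definable $R$), so one may legitimately quantify over the definable set $X$ rather than the non-definable $X\setminus Z$, obtaining
\[
\psi_\varphi(y)=\sup_{x\in X}\bigl[r\dotminus d\bigl(y,f(x)\bigr)\bigr]\wedge\varphi(x),
\]
with $d(y,Y)\geq r$ if and only if $\psi_\varphi(y)=0$ for every $\varphi\in\pi$. (The paper prints $\inf$ here, which appears to be a slip: with $\inf$ the predicate is identically zero whenever $Z\neq\emptyset$.) Note finally that this argument uses only surjectivity of $f$; injectivity plays no role, so your emphasis on $f^{-1}$ is a red herring.
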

\begin{proof}
  Since $Y$ is type-definable, the property
  $d(y,Y) \leq r$ is type definable.
  It will therefore be enough to show that $d(y,Y) \geq r$
  is a type-definable property for all $r$.
  Let $\pi(x)$ be the partial type defining $Z$,
  and let $\varphi \in \pi$.
  For each $x \in X$, either $f(x)$ is well defined or
  $\varphi(x) = 0$, so either way $d(y,f(x))\wedge\varphi(x)$ is well
  defined, and we claim that it is a definable predicate.
  Indeed, $d(y,f(x)) \wedge \varphi(x) \geq s$
  if and only if there exists $w$ such that
  $R(x,w)$ and $d(y,w) \wedge \varphi(x) \geq s$,
  and similarly for $\leq s$.
  Since $X$ is definable, we obtain a definable predicate
  \begin{gather*}
    \psi_\varphi(y)
    =
    \inf_{x\in X}
    \bigl[ r \dotminus d(y,f(x)) \bigr] \wedge \varphi(x).
  \end{gather*}
  We conclude by observing that
  $d(y,Y) \geq r$
  is defined by the partial type
  $\{\psi_\varphi\}_{\varphi \in \pi}$.
\end{proof}

Recall:
\begin{fct}[Noether's Normalisation Lemma]
  Let $A$ be an integral domain, finitely
  generated over a field $k$.
  Then there exist algebraically independent elements
  $x_0,\ldots,x_{d-1} \in A$ such that
  $A$ is integral over $k[x_0,\ldots,x_{d-1}]$.

  Moreover, if $k$ is infinite and $A = k[y_0,\ldots,y_{n-1}]$
  then each $x_i$ can be taken to be a
  $k$-linear combination of the $y_j$.
\end{fct}
Let $V$ be a projective variety of dimension $d$
defined over an infinite field $k$.
Let $y = [y_0:\ldots:y_n]$ be a generic point of $V$.
Let $x_0,\ldots,x_d$ be a transcendence basis for
$k[\bar y]$ consisting of $k$-linear combinations of $\bar y$, as per
Noether's Normalisation Lemma.
Then $[x_0:\ldots:x_d:y_0:\ldots:y_n]$
is the generic point of a projective variety isomorphic to $V$.

\begin{prp}
  Let $K\bP^1 \vDash ACMVF$.
  Then every Zariski closed set $V \subseteq K\bP^n$
  is definable.
\end{prp}
\begin{proof}
  Since a finite union of definable sets is definable, we may assume
  that $V$ is a variety, say of dimension $d$.
  Clearly every algebraic morphism is definable, and recall that
  the image of a definable set by a definable mapping is definable as
  well.
  It follows that we may replace $V$ with any isomorphic projective
  variety.
  Therefore, using Noether's Normalisation Lemma we may assume that
  the homogeneous prime ideal defining $V$ is
  $I(V) \subseteq K[X_0,\ldots,X_d,Y_0,\ldots,Y_{n-1}]$,
  where $K[\bar X] \cap I(V) = 0$
  and for each $j < n$ 
  there exists a homogeneous polynomial
  $f_j \in I(V) \cap K[\bar X,Y_j]$
  which is monic in $Y_j$.
  Possibly replacing $V$ with an isomorphic variety we may further
  assume that all the coefficients in each $f_j$ have value $\leq 1$.
  Thus we may express $|f_j(x_0,\ldots,x_d,x_{d+j+1})|$
  as an atomic formula $\|f_j(x)\|$
  in the free variable
  $x = [x_0:\ldots:x_n] \in \bP^n$
  and with parameters in $K$.
  We may further assume that all the $f_j$ have common degree $m$.

  As a first approximation, let
  $J = \langle f_j \rangle_{j<n} \subseteq I(V)$
  be the generated homogeneous
  ideal, and let us show that $V(J)$ is definable.
  Clearly $V(J)$ is the zero set of the formula
  $\bigvee_{j<n} \|f_j(x)\|$,
  and it will be enough to show that
  $d(x,V(J)) \leq \bigvee_{j<n} \|f_j(x)\|^{\frac{1}{m}}$.
  So let us fix  $x \in \bP^n$.
  For $j < n$, let
  \begin{gather*}
    g_j(Y_j) = f_j(x_0,\ldots,x_d,Y_j)
    = \prod_{k<m} (Y_j-\gamma_j^k)
    \in K[Y_j]
  \end{gather*}
  We may assume that for each $j < n$,
  the root $\gamma_j^0 = \gamma_j$ is closest to
  $x_{d+j+1}$ among all the roots of $g_j$.
  Let
  \begin{gather*}
    y = [x_0:\ldots:x_d:\gamma_0:\ldots:\gamma_{n-1}]
    = \left[
      \sfrac{x_0}{s}:\ldots:\sfrac{x_d}{s}:
      \sfrac{\gamma_0}{s}:\ldots:\sfrac{\gamma_{n-1}}{s}
    \right]
    \in V(J),
  \end{gather*}
  where $s$ is chosen of maximal value among
  $x_0,\ldots,x_d,\gamma_0,\ldots,\gamma_{n-1}$.
  A quick calculation yields,
  for $i \leq d$ and $j < n$,
  \begin{gather*}
    |x_iy_{d+j+1} - x_{d+j+1}y_i|
    =
    |\sfrac{x_i}{s}| |\gamma_j-x_{d+j+1}|
    \leq |g_j(x_{d+j+1})|^{\frac{1}{m}}
    = \|f_j(x)\|^{\frac{1}{m}},
  \end{gather*}
  and for $i,j < n$,
  \begin{align*}
    |x_{d+i+1}y_{d+j+1} - x_{d+j+1}y_{d+i+1}|
    &
    =
    |\sfrac{1}{s}| |\gamma_jx_{d+i+1} - \gamma_ix_{d+j+1}|
    \\ &
    \leq
    |\sfrac{\gamma_j}{s}| |x_{d+i+1} - \gamma_i|
    \vee
    |\sfrac{\gamma_i}{s}| |\gamma_j - x_{d+i+1}|
    \\ &
    \leq
    (\|f_i(x)\| \vee \|f_j(x)\|)^{\frac{1}{m}}.
  \end{align*}
  Thus
  $d(x,V(J)) \leq d(x,y) \leq \bigvee_{j<n} \|f_j(x)\|^{\frac{1}{m}}$,
  as desired.

  By construction, $V(J)$ is of dimension
  $\leq d$, and can be decomposed as
  $V(J) = V \cup W$ where $W \subseteq \bP^n$ is a Zariski closed as
  well and $\dim(V \cap W) < d$.
  By induction on the dimension we may assume already known that
  $V \cap W$ is definable.
  We may now apply \autoref{lem:DefinableUnionIntersection} and conclude
  that $V$ is definable.
\end{proof}

\begin{cor}
  Every complete variety is interpretable in $ACMVF$.
\end{cor}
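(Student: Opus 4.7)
The plan is to reduce to the projective case via Chow's lemma. Recall that Chow's lemma furnishes, for any complete variety $X$ over $K$, a projective variety $X'$ and a surjective morphism $\pi\colon X' \to X$ (after passing to irreducible components if need be). Consequently it will suffice to exhibit $X$ as the quotient of a definable set by a definable equivalence relation in $ACMVF$.

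First I would note, via the Segre embedding, that $X'$ and $X' \times X'$ embed as Zariski closed subsets of projective spaces $K\bP^n$ and $K\bP^N$, and hence by the preceding proposition (together with \fref{thm:ProjSpaceInterpretation}) are definable in $ACMVF$. Next I would identify the equivalence relation
$E = \{(a,b) \in X' \times X' : \pi(a) = \pi(b)\}$
with the fibre product $X' \times_X X'$; since $X$ is separated its diagonal is Zariski closed, so $E$ is Zariski closed in $X' \times X'$, hence in $K\bP^N$, and therefore definable by the preceding proposition.

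Finally, the interpretation of $X$ is then the quotient $X'/E$, equipped with the quotient pseudo-metric $d([a],[b]) = \inf\{d(a',b') : a' E a,\ b' E b\}$. The main point requiring care — and the only real obstacle — is verifying that a definable equivalence relation on a definable set yields a bona fide imaginary sort in continuous logic, i.e.\ that this infimum really is a definable predicate on $(X'/E)^2$. Since $E$ is definable rather than merely type-definable, this reduces to a standard compactness argument of the same flavour as \fref{lem:DefinableUnionIntersection} and \fref{lem:DefinablePartialImage}, and once it is carried out the desired interpretation is in place.
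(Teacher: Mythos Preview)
Your approach is essentially the same as the paper's: invoke Chow's lemma to present the complete variety as the image of a projective (hence definable) variety under a morphism, and conclude that it is a definable quotient of a definable set and therefore interpretable. The paper's proof is a two-line sketch that does not spell out the definability of the kernel pair or the passage to a quotient pseudo-metric, so your elaboration on those points is a reasonable filling-in of details the paper leaves implicit.
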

\begin{proof}
  By Chow's Lemma, if $W$ is a complete variety then it is the image
  of a projective variety $V$ by a morphism.
  In other words, it is a definable quotient of a definable set, and
  therefore interpretable.
\end{proof}

In particular, this means that a complete variety $W$ is endowed with
the quotient structure it inherits from the definable set $V$.
This does not depend on the choice of $V$.

\begin{qst}
  Characterise all definable sets over $K$.
  Notice that since every compact set is definable,
  there are definable sets which are not projective varieties,
  e.g., any set of the form $\{a_n\}_n \cup \{0\}$
  where $|a_n| \to 0$.
  More generally, every metrisable totally disconnected
  compact space can be embedded in $K\bP^1$, and a characterisation of
  definable sets will have to allow for them.

  Let $\{V_\alpha\}_{\alpha \in A}$ be a family of projective
  varieties, and assume that for every $\varepsilon > 0$
  there is a finite $A_0 \subseteq A$ such that
  $\bigcup_{\alpha \in A} V_\alpha$ is contained in the
  $\varepsilon$-neighbourhood of $\bigcup_{\alpha \in A_0} V_\alpha$.
  Then $X = \overline{\bigcup_{\alpha \in A} V_\alpha}$ is a definable
  set.
  Every Zariski closed set and every compact set are of this form.
  Are there any other definable sets?
\end{qst}

\begin{qst}
  Let $A$ be any semi-normed ring.
  Let $\cL_\bP(A)$ consist of a constant symbol in the sort $\bP^1$ for each member of $A$, and let $ACMVF(A)$ be the $\cL_\bP(A)$-theory consisting of $ACMVF$ along with axioms saying that $1 = 1_A$, $a+b = (a+_A b)$, $a \cdot b = (a \cdot_A b)$ and $|a| \leq |a|_A$ (i.e., $\|a\| \leq |a|_A$ if $|a|_A < 1$ and $\|a^*\| \geq |a|_A^{-1}$ otherwise).

  Assuming that $I \subseteq A[X_0,\ldots,X_n]$ is a homogeneous ideal, is $V(I)$ uniformly definable in $ACMVF(A)$?
\end{qst}

\section{Real closed and ordered metric valued fields}

We shall now seek to understand the metric valued analogue of the
theory of real closed fields.
First of all, we observe that the class of metric valued fields which
are, as pure fields, formally real, is \emph{not} elementary.
Indeed, such fields can be constructed with $1 + a^2$
of arbitrarily small (non zero) valuation, and in an ultra-product we
would obtain $1 + a^2 = 0$.
Thus $|1+x^2|$ must be bounded away from zero, which, in a real closed
field
(and more generally, in a field where a sum of squares is a square),
implies $|1+x^2| \geq 1$.

\begin{dfn}
  We say that a valued field $(K,|{\cdot}|)$
  is a \emph{formally real valued field},
  or that that $|{\cdot}|$ is a \emph{formally real valuation} on $K$,
  if its residue field is formally real.
  If in addition $K$ is real closed (as a pure field) then we say that
  it is a \emph{real closed valued field}.

  We recall that a field ordering (possibly partial) is one in which
  sums and products of positive elements, as well as all squares, are
  positive.
  A \emph{valued field ordering} is one in which, in addition, the
  valuation ring is convex.
\end{dfn}

\begin{lem}
  \label{lem:FormallyRealValuedField}
  Let $(K,|{\cdot}|)$ be a valued field.
  Then the following are equivalent.
  \begin{enumerate}
  \item The valued field  $(K,|{\cdot}|)$  is formally real
    (as a valued field).
  \item For all $x_0,\ldots,x_{n-1} \in K$:
    \begin{gather*}
      \left| \sum x_i^2 \right| = \bigvee |x_i|^2.
    \end{gather*}
  \item For all $x_0,\ldots,x_{n-1} \in K$:
    \begin{gather*}
      \left| 1+ \sum x_i^2 \right| \geq 1.
    \end{gather*}
  \end{enumerate}
  Similarly, a field $K$ equipped with a valuation $|{\cdot}|$ and an
  ordering $\leq$ is an ordered valued field if and only if for every
  $x,y \geq 0$: $|x+y| = |x| \vee |y|$.
\end{lem}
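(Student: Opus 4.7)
I propose to establish $(1) \Rightarrow (2) \Rightarrow (3) \Rightarrow (1)$ for the first assertion, then handle the ordered-field statement separately. The main engine throughout is the standard reduction to the residue field via normalisation by a coordinate of maximal value.

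For $(1) \Rightarrow (2)$: the ultrametric inequality already gives $|\sum x_i^2| \leq \bigvee |x_i|^2$, so only the reverse needs attention. Assuming not all $x_i$ vanish, pick $c \in K^\times$ with $|c| = \bigvee |x_i|$ and replace $x_i$ by $x_i/c$; this rescales both sides by $|c|^2$, so we reduce to $\bigvee |x_i| = 1$. Then all $x_i$ lie in the valuation ring $\cO$, and at least one has unit value, so the images $\bar x_i$ in the residue field $k$ are not all zero. Formal reality of $k$ forces $\sum \bar x_i^2 \neq 0$, which is exactly $|\sum x_i^2| = 1 = \bigvee |x_i|^2$. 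For $(2) \Rightarrow (3)$, apply (2) to the enlarged tuple $(1, x_0, \ldots, x_{n-1})$. For $(3) \Rightarrow (1)$, contrapositively, if $k$ is not formally real then $-1 = \sum \bar y_i^2$ for some $\bar y_i \in k$; lifting to $y_i \in \cO$, the element $1 + \sum y_i^2$ reduces to $0$ in $k$, so $|1 + \sum y_i^2| < 1$, violating (3).

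For the ordered-field clause, by definition a valued field ordering is one in which $\cO$ is convex, i.e., $0 \leq a \leq b$ and $b \in \cO$ imply $a \in \cO$. Writing $b = a + (b-a)$ with both summands $\geq 0$, convexity of $\cO$ is equivalent to the monotonicity statement $|a| \leq |b|$ for $0 \leq a \leq b$. Combined with the ultrametric bound $|a+(b-a)| \leq |a| \vee |b-a|$, this is in turn equivalent to the equality $|x+y| = |x| \vee |y|$ for $x, y \geq 0$, and hence (applying $z \mapsto -z$, which preserves absolute value) to the same equality for $x, y \leq 0$.

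None of the steps looks genuinely difficult; the only point demanding even mild care is the normalisation by a maximum-value coordinate in $(1) \Rightarrow (2)$, which is what makes the passage to the residue field legitimate. The rest is lifting, the ultrametric inequality, and the elementary manipulation $b = a + (b - a)$ for convexity.
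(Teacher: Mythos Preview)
Your proof is correct and complete; the paper's own proof is the single word ``Easy,'' so your argument is presumably the standard one the author had in mind. The normalisation trick in $(1)\Rightarrow(2)$ (taking $c$ to be an $x_j$ of maximal value), the lifting in $(3)\Rightarrow(1)$, and the reduction of convexity of $\cO$ to monotonicity of $|\cdot|$ on nonnegatives via $a/b \leq 1$ are exactly the expected moves.
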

\begin{proof}
  Easy.
\end{proof}

A formally real valued field is formally real as a plain field,
and conversely, a field $K$ is formally real if and only if the
trivial valuation on $K$ is formally real.

\begin{lem}
  \label{lem:RealClosedValuedField}
  Let $(K,|{\cdot}|)$ be a complete valued field.
  Then the following are equivalent.
  \begin{enumerate}
  \item The valued field $(K,|{\cdot}|)$ is real closed
    (as a valued field).
  \item The valued field $(K,|{\cdot}|)$ is formally real
    (as a valued field) and maximal as such among its algebraic valued
    field extensions.
  \end{enumerate}
\end{lem}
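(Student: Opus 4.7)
The direction $(1) \Rightarrow (2)$ is essentially free: if $K$ is real closed as a pure field, its algebraic closure is $K(\sqrt{-1})$, so every proper algebraic extension of $K$ contains a square root of $-1$ and is not formally real even as a pure field, hence not as a valued field. So $K$ is trivially maximal formally real valued among algebraic valued field extensions.

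For the nontrivial direction $(2) \Rightarrow (1)$, I plan to argue by contrapositive. Suppose $K$ is complete and formally real valued. Condition (3) of \fref{lem:FormallyRealValuedField} gives $|1+\sum x_i^2| \geq 1$, so $-1$ is not a sum of squares in $K$ and $K$ is formally real as a pure field. Assuming $K$ is not real closed (as a pure field), I want to exhibit a proper algebraic extension $L/K$ that still carries an extension of the valuation making it formally real valued; this will contradict maximality.

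The natural candidate is a real closure $L$ of $K$ with respect to a carefully chosen ordering $\leq_K$ on $K$. Since $k_K$ is formally real, it admits at least one ordering, and by the standard Baer--Krull lifting this ordering pulls back to an ordering $\leq_K$ on $K$ whose positive cone contains $\mathcal{O}_K^\times$-lifts of the positives of $k_K$, making $\mathcal{O}_K$ convex in $(K,\leq_K)$. Let $L$ be the real closure of $(K,\leq_K)$; by hypothesis $L \supsetneq K$. Completeness of $K$ ensures the valuation extends uniquely to $L$, and the unique ordering $\leq_L$ on the real closed field $L$ extends $\leq_K$. Because $\mathcal{O}_L$ is the integral closure of $\mathcal{O}_K$ in $L$ and integral closures of convex subrings in ordered algebraic extensions are again convex, $\mathcal{O}_L$ is convex in $(L,\leq_L)$. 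Then $k_L$ is the quotient of a real closed field by a convex maximal ideal, hence real closed, and in particular formally real. So $L$ is formally real as a valued field, contradicting the maximality of $K$.

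The main obstacle is the pair of classical inputs from the theory of ordered and valued fields: the lifting of an ordering from the residue field to a valuation-compatible ordering on $K$, and the stability of convexity of the valuation ring under passage to the real closure. Both are standard but must be invoked (or verified directly) to make the chain of implications rigorous; everything else is formal.
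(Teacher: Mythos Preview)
Your proposal is correct but takes a genuinely different route from the paper.

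The paper proves the nontrivial direction $(2)\Rightarrow(1)$ directly rather than by contrapositive: it takes an \emph{arbitrary} proper finite algebraic extension $K_1/K$, equips it with the (unique, by completeness) extended valuation, and uses maximality to conclude that the residue field $k_1$ is not formally real. It then invokes Hensel's Lemma on the complete field $K_1$ to lift this failure upward and conclude that $K_1$ itself is not formally real as a pure field. Since no proper algebraic extension of $K$ is formally real, $K$ is real closed.

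Your approach instead produces a \emph{specific} witness to non-maximality when $K$ is not real closed: the real closure of $K$ with respect to a valuation-compatible ordering obtained via Baer--Krull, shown to be formally real valued because convex valuations on real closed fields have real closed residue fields. This is structurally more informative (it exhibits the real closure as a formally real valued extension) but draws on heavier external input: the Baer--Krull lifting, the identification of $\mathcal{O}_L$ with the convex hull of $\mathcal{O}_K$ in $L$ (which uses Henselian uniqueness, not just the bare statement about integral closures you quote), and the theorem on residue fields of convex valuations in real closed fields. The paper's argument is shorter and essentially self-contained, needing only Hensel's Lemma; yours situates the result within the general theory of ordered valued fields. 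Both are legitimate, and the choice is a matter of taste and available background.
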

\begin{proof}
  One direction is immediate.
  For the other, we already know that $(K,|{\cdot}|)$ is a formally
  real valued field, and it is left to show that it is real closed as
  a pure field.
  Indeed, let $K_1/K$ be any proper algebraic field extension,
  which we may assume to be finite.
  We may then equip $K_1$ with an extension of the valuation
  (which is moreover unique since $K$ is complete).
  Let $k_1/k$ denote the corresponding residue field extension.
  Then $(K_1,|{\cdot}|)$ is not formally real, whereby $k_1$ is not
  formally real.
  On the other hand, $k_1/k$ is an algebraic extension, so $k_1$ is
  algebraically closed.
  Since $(K_1,|{\cdot}|)$ is complete, as a finite extension of a
  complete valued field, by Hensel's Lemma we have $i \in K_1$, and in
  particular $K_1$ is not formally real.
  This completes the proof.
\end{proof}

\begin{lem}
  \begin{enumerate}
  \item A real closed valued field admits a unique ordering
    (as a valued field), namely its unique ordering as a pure real
    closed field: $x \geq 0$ if and only if $x$ is a square.
  \item Every formally real valued field embeds in a real closed
    valued field.
  \item A valued field $(K,|{\cdot}|)$ is formally real if and only if
    it admits an ordering (as a valued field).
  \end{enumerate}
\end{lem}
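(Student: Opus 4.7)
The plan is to prove the items in the order (1), (3), (2), leaning at each stage on \fref{lem:FormallyRealValuedField} and standard facts about ordered and valued fields. For (1), a real closed valued field $(K,|{\cdot}|)$ is real closed as a pure field and therefore carries a unique pure-field ordering $\leq$, given by $x \geq 0 \iff x$ is a square. Any valued field ordering on $K$ is in particular a pure-field ordering, so it coincides with $\leq$, giving uniqueness. That $\leq$ is itself a valued field ordering I would verify via the final clause of \fref{lem:FormallyRealValuedField}: writing $x = a^2$ and $y = b^2$ for $x,y \geq 0$, the required identity $|x+y| = |x| \vee |y|$ becomes $|a^2+b^2| = |a|^2 \vee |b|^2$, which is exactly characterisation (2) of the same lemma for formally real valued fields.

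For (3), the $(\Leftarrow)$ direction is immediate: a valued field ordering makes the valuation ring convex, the residue field inherits the order and is thus formally real, so $(K,|{\cdot}|)$ is formally real valued. For $(\Rightarrow)$, since the residue field $k$ is formally real it admits an ordering, and I would invoke the Baer-Krull theorem to lift this ordering to one on $K$ for which the valuation ring is convex.

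For (2), the strategy is to combine (3) with the real closure of an ordered field. Starting from $(K,|{\cdot}|)$ formally real valued, apply (3) to obtain an ordering $\leq$ making $\mathcal O_K$ convex. Let $(K^{rc},\leq^{rc})$ be the real closure of $(K,\leq)$ as an ordered field, so $K^{rc}$ is real closed as a pure field. Define $\mathcal O \subseteq K^{rc}$ as the convex hull of $\mathcal O_K$, namely the set of $x$ with $-a \leq x \leq a$ for some $a \in \mathcal O_K$. A short verification shows that $\mathcal O$ is a valuation subring of $K^{rc}$ restricting to $\mathcal O_K$ on $K$, convex for $\leq^{rc}$, and with value group lying in the divisible hull of $|K^\times|$, hence still embeddable in $\bR^{>0}$; this yields a metric valuation $|{\cdot}|^{rc}$ on $K^{rc}$. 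The residue field of $(K^{rc},|{\cdot}|^{rc})$ inherits the ordering by convexity, is therefore formally real, and so $(K^{rc},|{\cdot}|^{rc})$ is a real closed valued field extending $(K,|{\cdot}|)$. The only substantive external input is the Baer-Krull lifting used in (3); all other steps are routine.
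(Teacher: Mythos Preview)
Your argument is correct, but the logical flow differs from the paper's. The paper proves the items in the stated order (1), (2), (3), with the dependencies running the opposite way to yours: item (2) is deduced from the preceding \fref{lem:RealClosedValuedField} (complete, then take a Zorn-maximal formally real valued extension, which that lemma identifies as real closed), and the forward direction of (3) then follows by embedding $K$ into a real closed valued field via (2) and restricting the unique ordering supplied by (1). In particular, Baer--Krull is never invoked.

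Your route trades that maximality argument for Baer--Krull as the single external input, and in exchange gives an explicit description of the real closed valued extension in (2): the real closure of the ordered field, equipped with the convex-hull valuation, whose value group you correctly locate inside the divisible hull of $|K^\times| \subseteq \bR^{>0}$. This is more concrete about what the extension looks like, while the paper's approach stays entirely within the tools already on hand (completion, Hensel, the previous lemma) and needs nothing from the Baer--Krull circle of ideas.
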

\begin{proof}
  For the first item, all we need to check is that valuation ring is
  convex in the unique field ordering, which is more or less immediate
  from the definition.
  The second item follows from \autoref{lem:RealClosedValuedField}.
  For the third and last item, one direction follows from the previous
  item, the other directly from the definitions.
\end{proof}

In order to express in $\cL_{\bP^1}$ that the valuation is formally
real one needs to take into account the homogenisation, yielding
\begin{gather*}
  \label{ax:FR}
  \tag{FR}
  \left\| \sum x_i^2 \right\|
  =
  \bigvee \|x_i\|^2\prod_{j \neq i} \|x_j^*\|^2.
\end{gather*}
Working in the projective space $\bP^n$ one can express this slightly
more elegantly as
\begin{gather*}
  \tag{FR'}
  \left\| \sum x_i^2 \right\|
  = 1,
\end{gather*}
where the sum is now over the homogeneous coordinates of a single
point $x$.

\begin{dfn}
  We define $FRMVF$, the theory of formally real metric valued fields, to consist of $MVF$ along with the axiom \autoref{ax:FR}.
  We define $RCMVF$, the theory of real closed metric valued fields, to consist, in addition, of the axioms
  \begin{align*}
    & \exists y \, \|y\| = \half, \\
    & \exists y \, \|x^2 - y^4\|, \\
    & \exists y \, \left\| y^{2n+1} + \sum_{i\leq 2n} x_iy^i \right\|.
  \end{align*}
\end{dfn}
As in the discussion following the definition of $ACMVF$, the
existential quantifiers are approximate, but in the case of the second
and third axiom they imply exact existence.

\begin{prp}
  Models of $FRMVF$ ($RCMVF$) are the projective lines over
  complete formally real (real closed and non trivial) valued fields.
\end{prp}

Ordered metric valued fields will be considered in an expanded
language $\cL_{o\bP^1} \supseteq \cL_{\bP^1}$ which we now define.
First, we wish to introduce a predicate $\< x \>$,
equal to zero if and only if $x$ is positive or zero.
Since $\infty$ is neither strictly positive not strictly negative, and
may be arbitrarily close both to positive and to
negative field elements, we require $\<\infty\> = 0$.
One natural definition (which later turns out to be correct) is
$\<x\> = \|x\| \wedge \|x^*\|$ for negative $x$, so in particular we
have a natural identity $\< x \> = \< x^{-1} \>$.
Since our language contains no function symbols, it will be convenient
to go further and add, for each polynomial $P \in \bZ[\bar X]$,
a predicate
\begin{gather*}
  \< P(\bar x) \>
  =
  \begin{cases}
    0 & P(\bar x) \geq 0, \\
    \|P(\bar x)\| \wedge \|P^*(\bar x)\|
    & \text{otherwise}.
  \end{cases}
\end{gather*}
In particular, if any $x_i$ is equal to $\infty$ and
$\deg_{X_i} P > 0$ then $\< P(\bar x) \> = 0$ by the ``otherwise''
clause.
Using the assumption that $K$ is an ordered valued field
one verifies that all the new predicates are $1$-Lipschitz.
In what follows, it will be convenient to keep in mind that
$\|P\| \wedge \|P^*\| =
\bigl( |P| \wedge 1 \bigr) \|P^*\|$.

\begin{dfn}
  We define $OMVF$,  the theory of ordered metric valued fields,
  to consist of $MVF$ along with
  \begin{align*}
    \tag{Tot}
    & \< P \> \wedge \< -P\> = 0
    \\
    \tag{AS}
    & \< P \> \vee \< -P\> = \|P\| \wedge \|P^*\|
    \\
    \tag{CA}
    & \<P+Q\>\|P^*Q^*\|
    \leq
    \<Q\>\|P^*(P+Q)^*\|
    \vee
    \<P\>\|Q^*(P+Q)^*\|
    \\
    \tag{CM}
    & \<-PQ\> \geq \<P\>\<Q\>
  \end{align*}
\end{dfn}

We leave it to the reader to check that if $K$ is an ordered valued
field then the associated $\cL_{o\bP^1}$-structure is a model of
$OMVF$, and conversely, that every model of $OMVF$ arises uniquely in
this fashion.


For any field $K$, let
$\Sq^K = \{x^2\}_{x\in K\bP^1} \subseteq K\bP^1$
(where $\infty^2 = \infty$).
For $P(\bar X) \in \bZ[\bar X]$ we consider the following definable
predicate
\begin{gather*}
  \< P(\bar x) \>^{\Sq} = \inf_y\, \|P(\bar x) - y^2\|.
\end{gather*}

\begin{lem}
  \label{lem:PolynomSq}
  For every model $K\bP^1 \vDash MVF$ we have
  \begin{gather*}
    \< P(\bar x) \>^{\Sq} =
    \begin{cases}
      0 & P(\bar x) \in \Sq, \\
      \|P(\bar x)\| \wedge \|P^*(\bar x)\|
      & \text{otherwise}.
    \end{cases}
  \end{gather*}
  In particular, if $x_i = \infty$ and $\deg_{X_i} P > 0$
  then $\<P(\bar x)\>^{\Sq} = 0$.
\end{lem}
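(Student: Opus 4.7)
The plan is to split along the two clauses of the piecewise definition, and to handle the non-square case by matching upper and lower bounds.

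If $P(\bar x) \in \Sq$, say $P(\bar x) = b^2$ for some $b \in M$, then $y := b$ witnesses the infimum: $\|P(\bar x) - b^2\| = 0$, so $\<P(\bar x)\>^{\Sq} = 0$. This also resolves the ``in particular'' clause, since the assumption $x_i = \infty$ with $\deg_{X_i} P > 0$ makes every surviving term of $P^h(\bar x,\bar x^*)$ a pure power of $X_i$ while $\prod x_{j,1}^{d_j}$ vanishes, forcing $P(\bar x) = [1:0] = \infty = \infty^2 \in \Sq$.

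Now suppose $P(\bar x) \notin \Sq$. For the upper bound, I would substitute two obvious values of $y$ into the defining infimum. Plugging $y = 0$ gives $\|P(\bar x) - 0\| = \|P(\bar x)\|$, while plugging $y = \infty$ gives (using $\infty^2 = \infty$ and the abbreviation $\|x^*\| = \|x - \infty\|$) $\|P(\bar x) - \infty\| = \|P^*(\bar x)\|$. Hence $\<P(\bar x)\>^{\Sq} \leq \|P(\bar x)\| \wedge \|P^*(\bar x)\|$.

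The lower bound is the main technical obstacle; I need to show $\|P(\bar x) - y^2\| \geq \|P(\bar x)\| \wedge \|P^*(\bar x)\|$ for every $y \in M$. Setting $R(\bar X, Y) = P(\bar X) - Y^2$, a direct computation gives the homogenization $R^h = P^h \cdot Y^{*2} - (\bar X^*)^{\bar d} \cdot Y^2$ with $\bar d = \deg_{\bar X} P$, so $\|R(\bar x, y)\|$ is the ultra-metric norm of a difference whose two summands have absolute values $\|P(\bar x)\|\,\|y^*\|^2$ and $\|P^*(\bar x)\|\,\|y\|^2$. I would apply axiom (Ult) to appropriate polynomial identities relating $R$, $P$ and $Y^2$; the crux is to argue that any $y$ witnessing $\|R(\bar x, y)\| < \|P(\bar x)\| \wedge \|P^*(\bar x)\|$ forces equality $P^h \cdot Y^{*2} = (\bar X^*)^{\bar d} \cdot Y^2$ in the projective coordinates, i.e., $P(\bar x) = [y_0 : y_1]^2 \in \Sq$, contradicting the standing assumption. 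This step will require a subcase analysis based on which of $\|P(\bar x)\|$ and $\|P^*(\bar x)\|$ is smaller, together with the normalization $\|y\| \vee \|y^*\| = 1$, to ensure the ``near cancellation'' forced by a small value of $\|R(\bar x, y)\|$ really does exhibit $P(\bar x)$ as a square.
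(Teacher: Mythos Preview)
Your decomposition (square case, upper bound via $y=0$ and $y=\infty$, contradiction for the lower bound) matches the paper's proof exactly. The gap is in how you close the lower bound.

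You assert that a witness $y$ with $\|R(\bar x,y)\| < \|P(\bar x)\| \wedge \|P^*(\bar x)\|$ ``forces equality $P^h \cdot Y^{*2} = (\bar X^*)^{\bar d} \cdot Y^2$ in the projective coordinates, i.e., $P(\bar x) = [y_0:y_1]^2$''. This is false: near-cancellation of the two summands only yields $|P(\bar x)| = |y|^2$, not $P(\bar x) = y^2$. Concretely, if $\bar x \in K$ with $\|P^*(\bar x)\| = 1$ and $|P(\bar x) - 1|$ is small but nonzero, then $y = 1$ gives $\|P(\bar x) - y^2\| = |P(\bar x) - 1| < 1 = \|P(\bar x)\| \wedge \|P^*(\bar x)\|$, yet $P(\bar x) \neq 1 = y^2$. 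Axiom (Ult) alone cannot bridge this.

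What the paper does: from the case split $|z| \leq 1$ versus $|z| > 1$ one extracts $|P(\bar x)| = |z^2|$, hence $\bigl|\tfrac{P(\bar x)}{z^2} - 1\bigr| < 1$. The missing ingredient is then \emph{Hensel's Lemma} (available because models of $MVF$ are over complete fields): $T^2 - P(\bar x)/z^2$ has the simple approximate root $T = 1$, so it has an exact root in $K$, and $P(\bar x) \in \Sq$ after all. The square root produced is not the original witness $z$ but a correction of it, and completeness is used essentially.

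A minor point: the paper derives the ``in particular'' clause directly from the upper bound (since $\|P^*(\bar x)\| = 0$ when some $x_i = \infty$ with $\deg_{X_i} P > 0$), rather than by arguing that $P(\bar x) = \infty \in \Sq$; the latter reading is awkward because $P(\bar x)$ is not literally defined as a point of $K\bP^1$ in this situation.
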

\begin{proof}
  Clearly, if $\bar x \in K$ and $P(\bar x) \in \Sq$
  then $\<P(\bar x)\>^{\Sq} = 0$.
  Also, we observe that
  $\|P(\bar x)-0^2\| = \|P(\bar x)\|$ and
  $\|P(\bar x)-\infty^2\| = \|P^*(\bar x)\|$.
  Thus
  $\< P(\bar x) \>^{\Sq} \leq \|P(\bar x)\| \wedge \|P^*(\bar x)\|$,
  and in particular $\< P(\bar x) \>^{\Sq} = 0$ if
  $x_i = \infty$ and $\deg_{X_i} P > 0$.
  It is left to consider the case where $\bar x \in K$ and
  $P(\bar x) \notin \Sq$.
  Indeed, assume that
  $\< P(\bar x) \>^{\Sq} < \|P(\bar x)\| \wedge \|P^*(\bar x)\|$.
  Then there is $z \in K^*$ such that
  $\|P(\bar x) - z^2\| < \|P(\bar x)\| \wedge \|P^*(\bar x)\|$,
  or equivalently
  $|P(\bar x) - z^2| \|z^*\|^2 < |P(\bar x)| \wedge 1$.
  If $|z| \leq 1$ then $|P(\bar x) - z^2|  < |P(\bar x)|$, whereby
  $|P(\bar x)| = |z^2|$;
  and if $|z| > 1$ then
  $|P(\bar x) - z^2| < \|z^*\|^{-2} = |z^2|$,
  and again $|P(\bar x)| = |z^2|$.
  Either way we get
  $|\frac{P(\bar x)}{z^2} - 1^2| < 1 = |\frac{P(\bar x)}{z^2}|$,
  and by Hensel's Lemma, $P(\bar x) \in \Sq$, contrary to our
  assumption.
\end{proof}

\begin{lem}
  \label{lem:DefinableSq}
  In any metric valued field the set $\Sq$ is closed and
  $d(x,\Sq) = \< x \>^{\Sq}$.
  In particular, $\Sq$ is uniformly
  definable across all complete valued fields.
\end{lem}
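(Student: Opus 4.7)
The plan is to reduce everything to the previous lemma. The equality $d(x,\Sq) = \< x \>^{\Sq}$ is essentially tautological: by the distance axiom (Dist), $d(x,y^2) = \|x-y^2\|$ for every $y \in K\bP^1$, and as $y$ ranges over $K\bP^1$ the element $y^2$ ranges over all of $\Sq$. Hence
\begin{gather*}
  d(x,\Sq) = \inf_{z \in \Sq} d(x,z) = \inf_{y \in K\bP^1} \|x-y^2\| = \< x \>^{\Sq}.
\end{gather*}

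For closedness, I would invoke \fref{lem:PolynomSq} with $P(X) = X$, which gives: either $x \in \Sq$, or $\< x \>^{\Sq} = \|x\| \wedge \|x^*\|$. The second alternative is strictly positive, since $\|x\| = 0$ forces $x = 0$ and $\|x^*\| = 0$ forces $x = \infty$, whereas $0 = 0^2$ and $\infty = \infty^2$ both lie in $\Sq$. Thus $d(x,\Sq) = 0$ implies $x \in \Sq$, so $\Sq$ is closed.

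Finally, the predicate $\inf_y \|x-y^2\|$ is quantifier-free definable (using the atomic symbol $\|P(x,y)\|$ with $P = X-Y^2$) followed by a single $\inf$, and this expression does not refer to $K$ in any way. Since it coincides with the distance to $\Sq$, we obtain a uniformly definable predicate witnessing that $\Sq$ is definable in every complete valued field. The only subtlety is making sure the points at infinity are handled correctly, which is exactly what the previous lemma takes care of.
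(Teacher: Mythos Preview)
Your proof is correct and follows essentially the same path as the paper's: establish $\|x - y^2\| = d(x,y^2)$, take the infimum over $y$, and then invoke \fref{lem:PolynomSq} for closedness. The only cosmetic difference is that the paper justifies $\|x - z^2\| = d(x,z^2)$ by pointing to \fref{lem:DefinablePower} (the representative $[b^2:(b^*)^2]$ of $z^2$ already has norm $1$, so the homogenisation of $X - Y^2$ matches the distance formula), whereas you appeal to (Dist); since (Dist) literally only says $d(x,y) = \|x-y\|$ for the linear polynomial $X-Y$, the cleaner citation is the power lemma, but the verification is immediate either way.
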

\begin{proof}
  It is easy to see that $\|x-z^2\| = d(x,z^2)$
  (compare with \autoref{lem:DefinablePower}), whereby
  $d(x,\Sq) = \< x \>^{\Sq}$.
  By \autoref{lem:PolynomSq}, if $x \notin \Sq$ then
  $d(x,\Sq) = \|x\| \wedge \|x^*\| > 0$,
  so $\Sq$ is closed.
\end{proof}

\begin{prp}
  Let $K \vDash RCMVF$.
  Then $K$ admits a unique expansion to a model of $OMVF$,
  given by $\< P \> = \< P \>^{\Sq}$.
\end{prp}

\begin{thm}
  The $\cL_{o\bP^1}$-theory $ORCMVF = RCMVF \cup OMVF$ is complete and
  admits quantifier elimination.
  The theory $RCMVF$ is model complete.
\end{thm}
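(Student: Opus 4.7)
The argument adapts the back-and-forth of Theorem \ref{thm:QE}, with extra bookkeeping for the order. Let $K, F \models ORCMVF$ be suitably saturated and let $\theta \colon A \to B$ be an $\cL_{o\bP^1}$-isomorphism between relatively small substructures; after passing to the valued real closures and then to the metric completions (canonical operations respecting the unique order), we may assume $A$ and $B$ are complete real closed valued subfields. Fix $c \in K$ transcendental over $A$. Since $A$ is real closed, every $P \in A[X]$ factors as a unit times linear factors and positive-definite quadratics $(X-\alpha)^2+\beta^2$; by \fref{lem:FormallyRealValuedField} each such quadratic evaluated at $c$ is positive with absolute value $\max(|c - \alpha|^2, |\beta|^2)$. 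Hence the quantifier-free $\cL_{o\bP^1}$-type of $c$ over $A$ is determined by the pairs $(|c - a|, \mathrm{sign}(c - a))$ for $a \in A$.

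To realize a finite piece of this type, let $a_0, \ldots, a_{n-1} \in A$ and $\varepsilon > 0$ be given, set $r = \min_i |c - a_i|$, and translate $c \mapsto c - a_0$, $a_i \mapsto a_i - a_0$ so that $|c| = r$, $|a_i| \leq r$ for all $i$, $|c - a_i| = r$ for $i < k$, and $|c - a_i| > r + \varepsilon$ for $k \leq i < n$; the conditions for $i \geq k$ follow by ultrametric dominance. Pick $e \in F$ with $|e| = r$, which exists because the value group of a non-trivially valued real closed field is divisible, hence dense. Dividing by $e$, the data $(|c - a_i|, \mathrm{sign}(c - a_i))_{i<k}$ is equivalent to a finite cut of $\bar c$ over $\theta\bar A$ in the residue field: for $|a_i| < r$ the sign of $d - \theta a_i$ equals $\mathrm{sign}(d)$ by dominance and so is controlled by $\mathrm{sign}(\bar d)$; for $|a_i| = r$ it is $\mathrm{sign}(\bar d - \theta \bar a_i)$. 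This cut, transferred by $\theta$ (which preserves order on residue fields), is realized in the real closed — hence densely ordered — residue field $\bar F$ by a finite density argument. Lifting yields the desired $d \in F$ with $|d| = r$ and the prescribed residue. The main obstacle relative to the ACMVF case is precisely this cut-realization step; the key new ingredient is that the residue field of a non-trivially valued real closed field is itself real closed.

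From QE, the completeness of $ORCMVF$ follows, since every sentence reduces to a quantifier-free one in the unique closed term $\infty$ whose quantifier-free theory is fixed by the axioms (all models have characteristic $0$). For model completeness of $RCMVF$, any $\cL_{\bP^1}$-embedding $\iota\colon K \hookrightarrow K'$ of $RCMVF$-models preserves $\Sq$ in both directions, since $\Sq = \{x \geq 0\} \cup \{\infty\}$ in a real closed field and every field embedding between real closed fields is bidirectionally order-preserving. By \fref{lem:PolynomSq} and \fref{lem:DefinableSq}, $\iota$ therefore preserves $\<P\> = \<P\>^{\Sq}$, so extends uniquely to an $\cL_{o\bP^1}$-embedding of the canonical $ORCMVF$-expansions, which is elementary by QE.
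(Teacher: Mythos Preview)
Your proof is correct, but takes a different route from the paper. The paper orders the $a_i$ by $<$, locates the immediate neighbours $a_i < c < a_{i+1}$ (the cases $c > A$ and $c < A$ being easy), and after an affine change of coordinates reduces to finding $d \in (0,1)$ with $|d|$ and $|1-d|$ close to $|c|$ and $|1-c|$; all remaining distances and signs then follow from the order and the ultrametric. Your residue-field approach is arguably more conceptual, making explicit that the heart of the matter is realising a finite cut in the (real closed) residue field of $F$; the paper's argument is more hands-on and never names the residue field. One caveat: ``divisible, hence dense'' does not by itself yield $e \in F$ with $|e| = r$ exactly --- density gives approximations, not exact values. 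The correct reason is saturation of $F$ (the condition $|x| = r$ is a consistent partial type, hence realised). Alternatively, split cases: if some $|a_i| = r$ then $r \in |A^\times|$ and one may take $e = \theta(e_A)$ for $e_A \in A$ of value $r$; otherwise every $a_i$ with $i < k$ has $|a_i| < r$, all the sign conditions collapse to $\mathrm{sign}(c)$ by dominance, and one may choose $d$ directly with $|d|$ close to $r$ and the right sign, bypassing the residue field entirely. (Also, ``$|a_i| \leq r$ for all $i$'' should read ``for $i < k$''.) Your derivations of completeness and of model completeness of $RCMVF$ from quantifier elimination are correct and in fact spelled out more carefully than in the paper.
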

\begin{proof}
  Completeness and model completeness follow quite easily from
  quantifier elimination, so we only prove the latter.
  For this, we shall prove that sufficiently saturated models admit an
  infinite back-and-forth.
  Using the uniqueness of the real closure of an ordered field, and
  proceeding as in the proof of \autoref{thm:QE}, we reduce to the case
  where $K\bP^1$ and $F\bP^1$ are two sufficiently saturated models,
  $A \subseteq K$ and $B \subseteq F$ are relatively algebraically
  closed complete sub-fields, and $\theta\colon A \to B$ is an
  isomorphism.
  In particular, $A$ and $B$ are real closed valued fields.

  Now let $c \in K \setminus A$.
  Its quantifier-free type is determined by the value and sign of
  $P(c)$ as $P(X)$ varies over $A[X]$.
  Since $A$ is real closed, every polynomial decomposes as a product
  of linear factors $X - a$ and irreducible quadratic factors
  $(X-a)^2 + b$, $b > 0$ (and $a,b \in A$).
  In the second case we have $(c-a)^2 + b > 0$ and
  $|(c-a)^2 + b| = |c-a|^2 \vee b$.
  Thus, the quantifier-free type of $c$ is determined by the value and
  sign of $c-a$ as $a$ varies over $A$.
  In order to find $d \in F$ with the corresponding quantifier-free
  type over $B$, it is enough to show that for every $\varepsilon > 0$
  and every finite family $a_0,\ldots,a_{n-1} \in A$
  there is $d \in F$ such that
  $d \leq \theta a_i \Longleftrightarrow c \leq a_i$
  and $\bigl|  |d-\theta a_i| - |c - a_i| \bigr| < \varepsilon$.
  We may assume that $a_i < a_{i+1}$ for $i < n-1$.

  If $c > A$ then the valuation on $A$ is necessary trivial.
  In this case we may take $d \in F$ to be any positive element with
  the same value as $c$ (or at least close enough).
  The case $c < A$ is treated similarly.
  Otherwise, there is $i$ for which $a_i < c_i < a_{i+1}$.
  Translating by $a_i$ and dividing by $a_{i+1}$ we may assume
  that $a_i = 0$ and $a_{n+1} = 1$.
  It will then be enough to find $0 < d < 1$ such that
  $\bigl|  |d| - |c| \bigr|,
  \bigl|  |1-d| - |1-c| \bigr|
  < \varepsilon$,
  and the rest will follow.
  Possibly replacing $c$ with $1-c$, we may further assume that
  $|c| \leq |1-c| = 1$.
  If $|c| < 1$, just take for $d$ any positive element whose value is
  close enough to $|c|$, and if $|c| = 1$ choose $d$ so that $|d|$ is
  close enough to $1- \varepsilon/2$.
  This completes the proof.
\end{proof}

\begin{thm}
  The theory $RCMVF$ is dependent.
\end{thm}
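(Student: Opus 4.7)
The plan is to work in the expansion $ORCMVF$---which by the preceding proposition is a definable expansion of $RCMVF$ via $\<P\> = \<P\>^{\Sq}$---and exploit its quantifier elimination. Since NIP passes to reducts, it suffices to show that $ORCMVF$ is dependent. By quantifier elimination, every formula of $ORCMVF$ is a uniform continuous combination of atomic formulas $\|P(\bar x)\|$ and $\<P(\bar x)\>$ with $P \in \bZ[\bar X]$, and by the standard reduction of NIP to the single-variable case (valid in continuous logic as in the classical setting), it is enough to verify NIP for each atomic formula $\varphi(x;\bar y)$ with $x$ a single variable.

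To analyse such a $\varphi$, I would pass to a sufficiently saturated real closed valued base $A$ containing $\bar y$ and factor $P(x, \bar y) \in A[x]$ in the real closure as a product of linear factors $(x - \alpha_j)$ and irreducible quadratic factors $(x - \beta_k)^2 + \gamma_k^2$. By the formally real identity of \fref{lem:FormallyRealValuedField}, $|(x - \beta_k)^2 + \gamma_k^2| = |x - \beta_k|^2 \vee |\gamma_k|^2$, so $|P(c,\bar y)|$ is computed from the distances $|c - \alpha_j|$ and $|c - \beta_k|$ via a continuous combination with coefficients in $A$. Similarly, $\<P(c,\bar y)\>$ is determined by the signs of $c - \alpha_j$ at the linear real roots, together with the sign of the leading coefficient. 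Hence $\varphi(x;\bar y)$ decomposes, modulo composition with algebraic functions of $\bar y$, into a continuous combination of two families of \emph{primitive} formulas: $|x - \alpha| \bowtie r$ (a ball-membership condition) and $x \bowtie \alpha$ (an order condition), with $\alpha$ algebraic over $\bar y$.

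Both primitive families are NIP. The ball-membership conditions already occur in $ACMVF$, which is stable---hence NIP---by the earlier sections of the paper. The order conditions are NIP because they sit inside the theory of dense linear orders, equivalently because the o-minimal theory $RCF$ is NIP. Finite continuous combinations preserve NIP. The last point is to argue that substituting algebraic functions $\alpha_j(\bar y), \beta_k(\bar y), \gamma_k(\bar y)$ for the primitive parameters also preserves NIP: given an indiscernible sequence $(\bar y_i)$, a Galois-theoretic resolution produces an indiscernible sequence of tuples of roots on which the primitive NIP applies, and indiscernibility transfers back to the original sequence because $P$ has bounded degree.

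The main obstacle is precisely this composition step; everything else is a direct application of quantifier elimination together with the known NIP of its two NIP building blocks (stable valuation theory, and o-minimal order theory). The argument is the standard template used to prove NIP for classical real closed valued fields, and its adaptation to continuous logic is cosmetic, since the formally real identity guarantees that the relevant norms and signs vary continuously in the parameters.
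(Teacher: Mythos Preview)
Your approach is the same as the paper's in outline: pass to $ORCMVF$, invoke quantifier elimination and the single-variable reduction, factor $P(x,\bar y)$ over a real closed base into linear and irreducible quadratic factors, and reduce to the primitives $|x-\alpha|$ and the sign of $x-\alpha$. The paper then finishes more directly, via the indiscernible-convergence criterion: since in a real closed field every element algebraic over $\bar b_n$ is already \emph{definable} over $\bar b_n$ (the real roots can be enumerated by the order), each root is the value of a $\emptyset$-definable partial function $f$, so $(f(\bar b_n))_n$ is itself indiscernible and hence monotone, from which convergence of both $|a-f(\bar b_n)|$ and $\<a-f(\bar b_n)\>$ is immediate. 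Your ``Galois-theoretic resolution'' is therefore unnecessary here---there is no Galois ambiguity to resolve over a real closed field---and your appeal to stability of $ACMVF$ for the ball primitive (defensible via the interpretation of $K\bP^1$ inside $K(i)\bP^1$) is replaced in the paper by the same monotonicity observation that already handles the sign primitive.
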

\begin{proof}
  It is enough to show that every formula $\varphi(x,\bar y)$, where $x$ is a single variable, is dependent (this is shown in \cite{BenYaacov:RandomVC} along the lines of the proof for classical logic in \cite{Poizat:Cours}; a simplified argument appears in Adler \cite{Adler:IntroductionToDependent}, and it translates quite effortlessly to continuous logic).
  It is therefore enough to show that if $(\bar b_n)_n$ is an indiscernible sequence then $\bigl( \varphi(a,\bar b_n) \bigr)_n$ converges for every $a$.
  By quantifier elimination, we may assume that $\varphi$ is an atomic $\cL_{o\bP^1}$-formula, namely of the form $\| P(x,\bar y) \|$ or $\< P(x,\bar y) \>$.
  Since the type $p = \tp(\bar b_n)$ is constant, and since every field element which is algebraic over $\bar b_n$ is definable over $\bar b_n$ (because of the linear ordering), we may express $\| P(x,\bar b_n) \|$ and $\< P(x,\bar b_n) \>$ as continuous combinations of things of the form $|x-f(\bar b_n)|$ and $\< x-f(\bar b_n) \>$, where $f$ stands for a partial $\emptyset$-definable function whose domain contains $p$ (as in the proof of the previous theorem).
  For each such function, the sequence $\bigl( f(\bar b_n) \bigr)_n$
  is indiscernible as well, so in particular monotone, and it follows that $| a- f(\bar b_n) |$ and $\< a - f(\bar b_n) \>$ converge.
  This completes the proof.
\end{proof}

Alternatively, we may define $\cL_{o\bP}$ to consist of $\cL_\bP$
augmented with one predicate symbol $\< \cdot \>$ for each sort
$\bP^n$, $n \geq 1$, interpreted in an ordered valued field by
\begin{gather*}
  \< [a_0:\ldots:a_n] \>
  =
  \begin{cases}
    0 & a_0a_1 \geq 0, \\
    |a_0|\wedge |a_1| & \text{otherwise}.
  \end{cases}
\end{gather*}
We observe that this does not depend on the choice of
representatives (as long as $\bigvee |a_i| = 1$, as usual)
and this is compatible with the interpretation of $\< x \>$ on $\bP^1$
we introduced earlier.
One can extend \autoref{thm:ProjSpaceInterpretation}, showing that for an
ordered valued field $K$, the $\cL_{o\bP^1}$-pre-structure $K\bP^1$
and the $\cL_{o\bP}$-pre-structure $K\bP$ are quantifier-free
biïnterpretable, and this uniformly in $K$.

\providecommand{\bysame}{\leavevmode\hbox to3em{\hrulefill}\thinspace}

\end{document}